\DeclareMathAlphabet{\pazocal}{OMS}{zplm}{m}{n}
\author{Yifei Zhu}
\address{Department of Mathematics\\Southern University of Science and 
         Technology\\Shenzhen\\Guangdong 518055\\P. R. China}
\email{zyf@umn.edu}
\urladdr{https://yifeizhu.github.io}
\newtheorem{thm}[equation]{Theorem}
\newtheorem{prop}[equation]{Proposition}
\newtheorem{lem}[equation]{Lemma}
\theoremstyle{definition}
\newtheorem{defn}[equation]{Definition}
\theoremstyle{remark}
\newtheorem{rmk}[equation]{Remark}
\newtheorem{ex}[equation]{Example}
\newtheorem{caut}[equation]{Caution}
\newtheorem{conv}[equation]{Convention}
\def\co{\colon\thinspace}
\newcommand{\mb}[1]{\mathbb{#1}}
\newcommand{\Spec}{{\rm Spec}}
\newcommand{\Spf}{{\rm Spf}}
\newcommand{\cF}{\overline {\mb F}}
\newcommand{\CC}{{\mathscr C}}
\newcommand{\CG}{{\mathscr G}}
\newcommand{\CM}{{\mathscr M}}
\newcommand{\CMB}{\overline{\CM}\!}
\newcommand{\CO}{{\mathscr O}}
\newcommand{\CP}{{\mathscr P}}
\newcommand{\eg}{{\em e.g.}}
\newcommand{\Sq}{{\rm Sq}}
\newcommand{\Frob}{{\rm Frob}}
\newcommand{\BC}{{\mb C}}
\newcommand{\BF}{{\mb F}}
\newcommand{\BG}{{\mb G}}
\newcommand{\BS}{{\mb S}}
\newcommand{\BZ}{{\mb Z}}
\newcommand{\HC}{\widehat{C~}\!}
\newcommand{\HCC}{\widehat{~\!\!\CC}\!}
\newcommand{\ea}{{\em et al.}}
\newcommand{\ib}{{\em ibid.}}
\newcommand{\ie}{{\em i.e.}}
\newcommand{\TE}{\widetilde{E\thinspace}\!}
\newcommand{\tA}{\widetilde{\A}}
\renewcommand{\th}{\widetilde{h}}
\newcommand{\tj}{\widetilde{j}}
\newcommand{\tu}{\widetilde{u}}
\newcommand{\tw}{\widetilde{w}}
\newcommand{\md}{\,~{\rm mod}~}
\newcommand{\can}{{\rm can}}
\newcommand{\Sub}{{\rm Sub}}
\newcommand{\A}{\alpha}
\newcommand{\B}{\beta}
\newcommand{\G}{\Gamma}
\newcommand{\K}{\kappa}
\newcommand{\mmu}{\mu\hskip-1.8mm\textcolor{white}{\spadesuit}\hskip-3mm\mu}
\renewcommand{\o}{\omega}
\newcommand{\T}{\tau}
\newcommand{\p}{\psi^{\,p}}
\newcommand{\ce}{\coloneqq}
\newcommand{\lb}{\llbracket}
\newcommand{\rb}{\rrbracket}
\newcommand{\lp}{(\!(}
\newcommand{\rp}{)\!)}
\newcommand{\wt}[1]{\textcolor{white}{#1} \!~}
\newcommand{\SL}{{\rm SL}}
\newcommand{\Tate}{{\rm Tate}}
\newcommand{\ch}[2]{{#1 \choose #2}}
\DeclareRobustCommand\widecheck[1]{{\mathpalette\@widecheck{#1}}}
\def\@widecheck#1#2{%
    \setbox\z@\hbox{\m@th$#1#2$}%
    \setbox\tw@\hbox{\m@th$#1%
       \widehat{%
          \vrule\@width\z@\@height\ht\z@
          \vrule\@height\z@\@width\wd\z@}$}%
    \dp\tw@-\ht\z@
    \@tempdima\ht\z@ \advance\@tempdima2\ht\tw@ \divide\@tempdima\thr@@
    \setbox\tw@\hbox{%
       \raise\@tempdima\hbox{\scalebox{1}[-1]{\lower\@tempdima\box
\tw@}}}%
    {\ooalign{\box\tw@ \cr \box\z@}}}
\numberwithin{equation}{section}
\numberwithin{figure}{section}
\begin{document}

\begin{abstract}
 We construct an integral model for Lubin-Tate curves as moduli of finite 
 subgroups of formal deformations over complete Noetherian local rings.  They 
 are $p$-adic completions of the modular curves $X_0(p)$ at a mod-$p$ 
 supersingular point.  Our model is semistable in the sense that the only 
 singularities of its special fiber are normal crossings.  Given this model, 
 we obtain a uniform presentation for the Dyer-Lashof algebra of Morava 
 E-theories at height $2$ as local moduli of power operations in elliptic 
 cohomology.  
\end{abstract}

\maketitle

\section{Introduction}
\label{sec:intro}

\subsection{Overview}
\label{subsec:overview}

Understanding various sorts of moduli spaces is central to contemporary 
mathematics.  In algebraic geometry, (elliptic) modular curves parametrize 
elliptic curves equipped with level structures, which specify features 
attached to the group structure of an elliptic curve.  Each type of level 
structures corresponds to a specific subgroup of the modular group 
$\SL_2(\BZ)$, which acts on the upper half of the complex plane.  The 
complex-analytic model for a modular curve is the quotient of the upper-half 
plane by the action of such a subgroup, as a Riemann surface.  

Over $\BZ$, Deligne and Rapoport initiated the study of semistable models for 
the modular curves $X_0(N\,\!p)$ \cite[VI.6]{DR}.  The affine curve $xy = p$ 
over $W(\cF_p)$ appeared in their work as a local model for $X_0(p)$ near a 
mod-$p$ supersingular point.  Recently, Weinstein produced semistable models 
for Lubin-Tate curves (at height $2$) by passing to the infinite 
$p^\infty$-level, when they each have the structure of a perfectoid space 
\cite{Weinstein}.  Such a Lubin-Tate curve is the rigid space attached to 
the $p$-adic completion of a modular curve at one of its mod-$p$ 
supersingular points.  As the supersingular locus is the interesting part of 
the special fiber of a modular curve, Weinstein's work essentially provides 
semistable models for $X(N\,\!p^m)$.  His affine models include curves with 
equations $xy^{\,q}-x^{\,q}y = 1$ and $y^{\,q}+y = x^{\,q+1}$ over $\cF_q$, 
where $q$ is a power of $p \neq 2$.  

In this paper, with motivation from algebraic topology, we construct a new 
semistable model over $W(\cF_p)$ for the modular curve $X_0(p)$ near a 
mod-$p$ supersingular point.  The equation (see \eqref{me} below) for our 
integral affine model is more complicated than that of the Deligne-Rapoport 
model, while it reduces modulo $p$ to $x\,(y-x^{\,p}) = 0$.  The integral 
modular equation is essential to our application which produces an explicit 
presentation for the Dyer-Lashof algebra of a Morava E-theory at height 2, 
uniform with all $p$.  

The Dyer-Lashof algebra governs power operations on Morava E-theory as a 
generalized cohomology theory \cite{cong}.  As cohomology operations are 
natural transformations between functors, this algebra is a moduli space.  
Indeed, over the sphere spectrum $\BS$, Morava E-theories are topological 
realizations of Lubin-Tate curves of level $1$.  Their operations are thus 
parametrized by Lubin-Tate curves of higher levels.  

To be more precise, our results fit into the following framework.  
\[
 \begin{tikzpicture}
  \node (LT) at (0,3) {$\text{moduli of $\pazocal E\ell\ell_G$}$};
  \node (LT') at (0,2.6) {$\scriptstyle\text{to be understood as $G$ 
  varies}$};
  \node (MT) at (4.3,3) {$\text{moduli of $\pazocal E\ell\ell_{\G_1(N)}$}$};
  \node (RT) at (9.7,3) {$\text{\fbox{moduli of $\CC_N$}}$};
  \node (LB) at (0,0) {$\text{moduli of $\pazocal Quasi\,\pazocal 
        E\ell\ell_G$}$};
  \node (MB0) at (3.95,0) {$\textcolor{white}{\text{\fbox{moduli of $E$}}}$};
  \node (MB) at (4.3,0) {$\text{\fbox{moduli of $E$}}$};
  \node (MB1) at (4.3,-.5) {$\scriptstyle\text{Theorem \ref{thm:DL}}$};
  \node (MB') at (4.3,-3) {$\text{moduli of $L_{K(1)}E$}$};
  \node (MB'') at (4.3,-3.5) {$\scriptstyle\text{punctured formal 
  neighborhood of the}$};
  \node (MB''') at (4.3,-3.8) {$\scriptstyle\text{supersingular point as an 
  ordinary locus}$};
  \node (RB) at (9.7,0) {$\text{\fbox{moduli of $\BG$}}$};
  \node (RB1) at (9.7,-.5) {$\scriptstyle\text{Theorem \ref{thm:me}}$};
  \node (B) at (2.15,-1.5) {$\text{has power operations}$};
  \draw [->] (LT) -- node [above] {$\scriptstyle G\,=\,\G_1(N)$} (MT);
  \draw [->] (MT) -- node [above] {$\scriptstyle\text{derived version of}$} 
        (RT);
  \draw [->] (LT') -- node [left] {$\scriptstyle\text{at cusps}$} (LB);
  \draw [->] (MT) -- (MB);
  \draw [<->] (RT) -- node [left] {$\scriptstyle\text{at a supersingular 
        point}$} (RB);
  \draw [<->] (RT) -- node [right] {$\scriptstyle\text{Serre-Tate}$} (RB);
  \draw [<->] (MB) -- node [above] 
        {$\scriptstyle\text{Ando-Hopkins-Strickland}$} (RB);
  \draw [<->] (MB) -- node [below] {$\scriptstyle\text{Rezk}$} (RB);
  \draw [<->] (LB) to [out=270,in=180] node [left] 
        {$\scriptstyle\text{related (also to $K_\Tate$)}$} (MB');
  \draw [->] (MB1) -- node [right] {$\scriptstyle\text{$K(1)$-localization}$} 
        (MB');
  \draw [->,double] (B) -- node [right] {$\scriptstyle\text{~Huan}$} (LB);
  \draw [->,double] (B) -- (MB0);
  \draw [->,double] (B) -- (MB');
 \end{tikzpicture}
\]
In this diagram, the arrows between the boxed regions establish the 
aforementioned correspondences among modular curves, Lubin-Tate curves, and 
Dyler-Lashof algebras \cite{LST,AHS04,cong}.  Here, $E$ is a Morava E-theory 
spectrum of height $2$ at the prime $p$, $\BG$ is the formal group of $E$ as 
a Lubin-Tate universal deformation, and $\CC_N$ is a universal elliptic curve 
equipped with a level-$\G_1(N)$ structure whose formal group is isomorphic to 
$\BG$ (see Section \ref{sec:models} below).  Conjecturally, the moduli of $E$ 
should be the restriction of a moduli for a suitable equivariant elliptic 
cohomology theory $\pazocal E\ell\ell_G$ 
\cite{survey,global,Huan,RezkCorr13}.  

This paper represents an attempt to understand the above picture by working 
explicitly through the boxed regions.  To obtain the local model for 
$X_0(p)$, our strategies can be summarized in the following diagram 
(cf.~Figure \ref{fig} below), where we exploit the effectiveness of a modular 
form in terms of its calculable invariants, \ie, its weight, level, and a 
finite number of its first Fourier coefficients.  
\[
 \begin{tikzpicture}[baseline={([yshift=-10pt]current bounding box.north)}]
  \node (MT') at (3,2.75) {$\text{global moduli}$};
  \node (MT) at (3, 2.45) {$\scriptstyle\text{(modular forms)}$};
  \node (LB) at (0,0) {$\text{punctured formal neighborhood}$};
  \node (LB') at (0,-.5) {$\text{around the cusps}$};
  \node (LB'') at (0,-.9) {$\scriptstyle\text{(modular forms evaluated at 
        Tate curves)}$};
  \node (RB) at (6,0) {$\text{formal neighborhood}$};
  \node (RB') at (6,-.5) {$\text{of a supersingular point}$};
  \node (RB'') at (6,-.9) {$\scriptstyle\text{(local parameters / functions 
        on Lubin-Tate curves)}$};
  \draw [->] (MT) -- node [left] {$\scriptstyle\text{restriction / 
        completion}$} (LB);
  \draw [->] (MT) -- node [right] {$\scriptstyle\text{restriction / 
        completion}$} (RB);
 \end{tikzpicture}
\]

We now explain our main results in more detail and state the theorems in 
Sections \ref{subsec:modag} and \ref{subsec:modat}, respectively, concerning 
aspects of algebraic geometry and algebraic topology.

\subsection{Moduli of elliptic curves and of formal groups, and Theorem 
\ref{thm:me}}
\label{subsec:modag}

Known to Kronecker, the congruence 
\begin{equation}
 \label{Kronecker}
 (\,j-\tj^{\,p})(\,\tj-j^{\,p})\equiv0\md p 
\end{equation}
gives an equation for the modular curve $X_0(p)$ which represents (in a 
relative sense, cf.~Section \ref{subsec:levelp} below) the moduli problem 
$[\G_0(p)]$ for elliptic curves over a perfect field of characteristic $p$.  
This moduli problem associates to such an elliptic curve its finite flat 
subgroup schemes of rank $p$.  A choice of such a subgroup scheme is 
equivalent to a choice of an isogeny from the elliptic curve with a 
prescribed kernel.  The $j$-invariants of the source and target curves along 
this isogeny are parametrized by $j$ and $\tj$.  

More precisely, this Kronecker congruence provides a {\em local} description 
for $[\G_0(p)]$ at a supersingular point.  For large primes $p$, the mod-$p$ 
supersingular locus may consist of more than one closed point.  In this case, 
$X_0(p)$ does not have an equation in the simple form above.  Only its 
completion at a single supersingular point does.  

There are polynomials which describe $X_0(p)$ as a curve over $\Spec(\BZ)$.  
In the Modular Polynomial Databases of the computational algebra software 
\href{http://magma.maths.usyd.edu.au/magma/handbook/modular_curves}{Magma}, 
{\em classical modular polynomials} lift and globalize the Kronecker 
congruence, while {\em canonical modular polynomials}, in a different 
pair of parameters, appear simpler.  Here is a sample of the latter, with the 
first three modular curves of genus $0$ and the last one of genus $1$ 
(cf.~Figure \ref{fig} below for $p = 11$).  
\[
 \begin{split}
   X_0(2)\qquad & ~x^3+48x^2+(768-j)x+2^{12} \\
         \equiv & ~x(x^2-j)\md2 \\
   X_0(3)\qquad & ~x^4+36 x^3+270 x^2+(756-j) x+3^6 \\
         \equiv & ~x(x^3-j)\md3 \\
   X_0(5)\qquad & ~x^6+30x^5+315x^4+1300x^3+1575x^2+(750-j)x+5^3 \\
         \equiv & ~x(x^5-j)\md5 \\
  X_0(11)\qquad & ~x^{12}-5940x^{11}+14701434x^{10}+(-139755\,j-19264518900) 
                  x^9 \\
                & +(723797800\,j+13849401061815)x^8+(67496\,j^2-1327909897380 
                  \,j \\
                & -4875351166521000)x^7+(2291468355\,j^2+1036871615940600\,j 
                  \\
                & +400050977713074380)x^6+(-5346\,j^3+4231762569540\,j^2 \\
                & -310557763459301490\,j+122471154456433615800)x^5 \\
                & +(161201040\,j^3+755793774757450\,j^2+17309546645642506200 
                  \,j \\
                & +6513391734069824031615)x^4+(132\,j^4-49836805205\,j^3 \\
                & +6941543075967060\,j^2-64815179429761398660\,j \\
                & +104264884483130180036700)x^3+(468754\,j^4+51801406800\,j^3 
                  \\
                & +214437541826475\,j^2+77380735840203400\,j \\
                & +804140494949359194)x^2+(-\,j^5+3732\,j^4-4586706\,j^3 \\
                & +2059075976\,j^2-253478654715\,j+2067305393340)x+11^6 \\
         \equiv & ~x\big(x^{11}-j^2(\,j-1)^3\big)\md11 
 \end{split}
\]
In these canonical modular polynomials for $X_0(p)$, the variable 
\begin{equation}
\label{etaquot}
 x = x(z) \ce p^s\!\left[\frac{\eta(pz)}{\eta(z)}\right]^{\!2s} 
\end{equation}
where $\eta$ is the Dedekind $\eta$-function and $s = 12/\!\gcd(p-1,12)$ ($s$ 
equals the exponent in the constant term of each polynomial).  The 
Atkin-Lehner involution (see Section \ref{subsec:AL} below) sends $x$ to 
$p^s/x$.  Computing these polynomials can be difficult.  As Milne warns in 
\cite[Section 6]{Milne}, ``one gets nowhere with brute force methods in this 
subject.''  Fortunately, for our purpose, we need only a suitable {\em local} 
(but still integral) equation for $X_0(p)$ completed at a single mod-$p$ 
supersingular point, which we shall present below as a variant of the above 
polynomials in $j$ and $x$.  

As mentioned earlier, this completion of $X_0(p)$ is a Lubin-Tate curve, 
which is a moduli space for formal groups.  Indeed, there is a connection 
between the moduli of formal groups and the moduli of elliptic curves.  This 
is the Serre-Tate theorem, which states that $p$-adically, the deformation 
theory of an elliptic curve is equivalent to the deformation theory of its 
$p$-divisible group \cite[Section 6]{LST}.  In particular, the $p$-divisible 
group of a supersingular elliptic curve is formal.  Thus the local 
information provided by the Kronecker congruence (and its integral lifts) 
becomes useful for understanding deformations of formal groups of height 2.  

Lubin and Tate developed the deformation theory for one-dimensional formal 
groups of finite height \cite[esp.~Theorem 3.1]{LT}.  More recently, with 
motivation from algebraic topology, Strickland studied the classification of 
finite subgroups of Lubin-Tate universal deformations.  In particular, he 
proved a representability theorem for this moduli of deformations 
\cite[Theorem 42]{Str97}.  The representing objects, each a Gorenstein affine 
formal scheme, are precisely what we will call {\em Lubin-Tate curves of 
level $\G_0(p^m)$}.  

Our first main result gives an explicit model for Lubin-Tate curves of level 
$\G_0(p)$ over the Witt ring $W(\cF_p)$, whose special fiber is {\em 
semistable} in the sense that its only singularities are normal crossings.  
Equivalently, this describes the complete local ring of the modular curve 
$X_0(p)$ at a mod-$p$ supersingular point in terms of generators and 
relations.  
\begin{thm}
 \label{thm:me}
 Let $\BG_0$ be a formal group over $\cF_p$ of height $2$ and let $\BG$ be 
 its universal deformation over the Lubin-Tate ring.  For each $m\geq0$, 
 denote by $A_m$ the ring which classifies degree-$p^m$ subgroups of the 
 formal group $\BG$.  It is the ring of functions on the Lubin-Tate curve of 
 level $\G_0(p^m)$.  In particular, write $A_0 \cong W(\cF_p)\lb h\rb$ for 
 the Lubin-Tate ring.  

 Then, when $m = 1$, the ring $A_1 \cong 
 W(\cF_p)\lb h,\A\rb\,\big/\big(w(h,\A)\big)$ is determined by the polynomial 
 \begin{equation}
  \label{me}
  w(h,\A) = (\A-p)\big(\A+(-1)^{\,p}\big)^p-\big(h-p^2+(-1)^{\,p}\big)\A 
 \end{equation}
 which reduces to $\A(\A^{\,p}-h)$ modulo $p$.  
\end{thm}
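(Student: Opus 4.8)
The plan is to present $A_1$ as a monogenic extension of $A_0 = \BW(\cF_p)\lb h \rb$ and to compute the minimal polynomial of a well-chosen generator $\A$. By Strickland's representability theorem \cite[Theorem 42]{Str97}, $A_1 = \CO_{\Sub_1(\BG)}$ is a finite free $A_0$-module; its rank is the number of order-$p$ subgroups in the generic fibre of $\BG[p]$, a finite flat group scheme of order $p^2$ that becomes $(\BZ/p)^2$ over an algebraic closure, so these subgroups are parametrized by $\BP^1(\BF_p)$ and number $p+1$. It therefore suffices to exhibit a single $\A \in A_1$ generating $A_1$ over $A_0$ and to show that its minimal polynomial is monic of degree $p+1$ and equals $w(h,\A)$. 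As a consistency check, the polynomial $w$ displayed above is indeed monic of degree $p+1$ in $\A$.

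First I would fix an explicit universal deformation. Via the Serre-Tate theorem \cite[Section 6]{LST} I may take $\BG_0$ to be the formal group of a supersingular elliptic curve over $\cF_p$ and transport its universal deformation to the formal-group side; this yields an explicit formal group law $\BG$ over $A_0$ in which $h$ can be chosen so that, modulo $p$, the locus $h = 0$ is the height-$2$ (supersingular) point and $h \neq 0$ is the height-$1$ (ordinary) locus. With such a model, the universal degree-$p$ subgroup $H \subset \BG$ over $A_1$ is cut out by a Weierstrass polynomial $f_H(x) = \prod_{s \in H}(x - s)$, monic of degree $p$ in $x$ with $f_H(0) = 0$ and coefficients in $A_1$; the requirement that $V(f_H)$ be closed under $+_\BG$, equivalently that the quotient isogeny $\f_H \co \BG \to \BG/H$ exist with kernel $H$, supplies the defining relations.

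Next I would isolate the generator. The target reduction $w \equiv \A(\A^{\,p} - h) \md p$ dictates the meaning of $\A$: the component $\A = 0$ must be the \emph{canonical} subgroup $\ker \Frob$, which over the ordinary locus is the connected $\mu_p$ and which at $h = 0$ is the unique order-$p$ subgroup, while the component $\A^{\,p} = h$ collects the $p$ \emph{anticanonical} subgroups (again $\ker \Frob$ in characteristic $p$, but remembered with multiplicity). Accordingly I would take $\A$ to be the invariant of $\f_H$ that cuts out the canonical component --- concretely its normalized linear coefficient $\f_H'(0)$, or equivalently a fixed coefficient of $f_H$ --- and then express every remaining coefficient of $f_H$, together with the classifying invariant $\th$ of $\BG/H$, as a power series in $h$ and $\A$. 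The modular equation appears as the single closure relation among these; computing it modulo $p$ returns $\A(\A^{\,p} - h)$ cleanly, since there $\f_H$ is (a twist of) Frobenius and $\ker \Frob$ is the only subgroup.

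The main obstacle is the integral lift. The reduction modulo $p$ is forced and straightforward, but pinning down the exact polynomial --- in particular the factors $(\A - p)$ and $\big(\A + (-1)^{\,p}\big)^p$ and the coefficient $h - p^2 + (-1)^{\,p}$ --- requires controlling the higher Witt-vector corrections to $\f_H$. I expect the uniform closed form to come either from an explicit formula for the universal isogeny valid for all $p$, or from showing that the chosen $\A$ makes the minimal polynomial factor integrally along the canonical/anticanonical splitting; the signs $(-1)^{\,p}$ should then be traceable to the duality $H \leftrightarrow \BG[p]/H$ interchanging the two components, with the cases $p = 2$ and $p$ odd unified by the single expression $(-1)^{\,p}$. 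Verifying the stated congruence modulo $p$ is then the routine computation recorded in the statement.
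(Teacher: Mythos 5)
Your setup (rank $p+1$ via Strickland, a monogenic presentation by a norm-type parameter $\A$ built from the cotangent map of the quotient isogeny, and the forced mod-$p$ reduction $\A(\A^{\,p}-h)$) matches the paper's framework, but the proposal stops exactly where the theorem's actual content begins: you name the determination of the integral coefficients as ``the main obstacle'' and then offer only two speculative routes, neither of which is carried out and one of which cannot work. An ``explicit formula for the universal isogeny valid for all $p$'' is not available in the purely local Lubin--Tate setting --- that is precisely the difficulty. And the hoped-for integral factorization of the minimal polynomial ``along the canonical/anticanonical splitting'' is impossible: a factorization of $w(h,\A)$ over $A_0=\BW\big(\cF_p\big)\lb h\rb$ into a degree-$1$ and a degree-$p$ factor would give $\BG$ a degree-$p$ subgroup defined over $A_0$ itself, but the canonical subgroup does not extend over the supersingular point, and indeed the displayed $w$ does not factor over $A_0$; the product $(\A-p)\big(\A+(-1)^{\,p}\big)^p$ only emerges after discarding the term $\big(h-p^2+(-1)^{\,p}\big)\A$.

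The paper's mechanism, which is absent from your proposal, is essentially global-modular rather than local. Via Serre--Tate and a $\CP_N$-model, $h=j-j_0$ and $\A$ are (germs of) genuine modular functions on a modular curve, so the coefficients $w_i$ can be probed by $q$-expansions: comparing the expansion of $\tA=\mu q^{-1}+O(1)$ against $\th(z)=j(pz)-j_0=q^{-p}+O(1)$, and using freeness of $A_1$ over $A_0$, shows that after a constant renormalization of $h$ every $w_i$ with $i\neq 1$ lies in $\BZ$ while $w_1=-h$. The numerical values of those integer constants are then read off at the cusps, where Katz's Tate-curve computation exhibits the $p+1$ roots explicitly and yields the limiting equation $(\A-p)\big(\A+(-1)^{\,p}\big)^p=0$ (Proposition \ref{prop:cusps}); continuity over the moduli scheme transfers this back to the supersingular formal neighborhood. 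Without some substitute for these two steps --- (i) a reason the non-$h$ coefficients are constants, and (ii) a boundary or degeneration computation pinning down their values --- your argument establishes only the existence of some monic degree-$(p+1)$ relation reducing to $\A(\A^{\,p}-h)$ mod $p$, not the specific polynomial \eqref{me}.
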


\begin{rmk}
 The rings $A_m$ are denoted by $\CO_{\Sub_m(\BG)}$ in \cite{Str97}.  
 As a result of a different choice of parameters, the last congruence above 
 is not in the form \eqref{Kronecker} of Kronecker's.  The letter $h$ stands 
 for ``Hasse'' in the Hasse invariant.  See Section \ref{subsec:parameters} 
 below for details about parameters.  See also Remark \ref{rmk:choices} for 
 dependence of \eqref{me} on the choices involved for different primes $p$.  
\end{rmk}

\begin{rmk}
 Let $\tA$ denote the image of $\A$ under the Atkin-Lehner involution (see 
 Section \ref{subsec:AL} below).  We will show in \eqref{AA} that $\A\cdot\tA 
 = (-1)^{\,p-1}p$ (cf.~$xy = p$ in the Deligne-Rapoport model in Section 
 \ref{subsec:overview}).  Note that the product equals the constant term of 
 \eqref{me} as a polynomial in $\A$ of degree $p+1$.  Thus factoring out $\A$ 
 from the modular equation $w(h,\A) = 0$, we obtain a congruence 
 \[
  h\equiv\A^{\,p}+\tA\md p 
 \]
 This is a manifest of the Eichler-Shimura relation $T_p\equiv F+V\md p$ 
 between the Hecke, Frobenius, and Verschiebung operators, which reinterprets 
 the moduli problem $[\G_0(p)]$ in characteristic $p$ (see below in this 
 remark and the discussion of dual isogenies in Section \ref{subsec:cusps}).  
 
 The polynomial $w(h,\A)$ can be viewed as a local variant of a canonical 
 modular polynomial, whose parameters are the $j$-invariant and an 
 eta-quotient \eqref{etaquot}.  In fact, the key step \eqref{key} in our 
 proof of Theorem \ref{thm:me} below was inspired by \cite[Example 2.4, 
 esp.~(2.4)]{Choi}.  
 
 Related to Choi's work, compare the sequences 
 $\{\,j_n^{\,(p)}(z)\}_{n=1}^\infty$ where $p\in\{2,3,5,7,13\}$\footnote{This 
 constraint on $p$ is necessary for univalence of (global) modular functions 
 on zero-genus congruence subgroups.  Cf.~Lemma \ref{lem:expansion} below.  } 
 in \cite{Ahlgren} and $\{\,j_m(z)\}_{m=0}^\infty$ in \cite{BKO} of Hecke 
 translates of Hauptmoduln.  They are analogous to the relation 
 $h\equiv T_p\,\A\md p$ from above.  
\end{rmk}

\subsection{Moduli of Morava E-theory spectra and Theorem \ref{thm:DL}}
\label{subsec:modat}

The Adem relations 
\begin{equation}
 \label{Adem}
 \Sq^i\,\Sq^{\,j} = \sum_{k=0}^{\left\lfloor\frac{i}{2}\right\rfloor} 
 \ch{j-k-1}{i-2k}\Sq^{i+ j-k}\,\Sq^k \hskip2cm 0<i<2\,j 
\end{equation}
describe the rule of multiplication (composition of cohomology operations) 
for the Steenrod squares $\Sq^i$.  These are power operations in ordinary 
cohomology with $\BF_2$-coefficients.  In general, for ordinary cohomology 
with $\BF_p$-coefficients, the collection of its power operations has the 
structure of a graded Hopf algebra over $\BF_p$, called the {\em mod-$p$ 
Steenrod algebra} (cf.~Dyer-Lashof operations in, \eg, \cite[Theorem 
III.1.1]{H_infty} and see \eqref{total} below).  

Quillen's work connects complex cobordism and the theory of one-dimensional 
formal groups \cite{Quillen}.  This leads to a {\em height filtration} for 
the stable homotopy category, which has turned out to be a highly effective 
principle since the 1980s for organizing large-scale periodic phenomena in 
the stable homotopy groups of spheres \cite{DHS,HS,orange}.  The height of a 
formal group indicates the filtration level of its corresponding generalized 
cohomology theory.  Ordinary cohomology theories with $\BF_p$-coefficients 
fit into this framework of {\em chromatic homotopy theory}, as theories 
concentrated at height $\infty$.  

The power operation algebras for cohomology theories at other chromatic 
levels have been studied as well.  In particular, central to the chromatic 
viewpoint is a family of Morava E-theories, one for each finite height $n$ at 
a particular prime $p$.  More precisely, given any formal group $\BG_0$ of 
height $n$ over a perfect field of characteristic $p$, there is a Morava 
E-theory associated to the Lubin-Tate universal deformation of $\BG_0$.  Via 
Bousfield localizations, these Morava E-theories determine the chromatic 
filtration of the stable homotopy category.  

There is a connection between (stable) power operations in a Morava E-theory 
$E$ and deformations of powers of Frobenius on its corresponding formal 
group $\BG_0$.  This is Rezk's theorem, built on the work of Ando, Hopkins, 
and Strickland \cite{Ando95, Str97, Str98, AHS04}.  It gives an equivalence 
of categories between (i) graded commutative algebras over a Dyer-Lashof 
algebra for $E$ and (ii) quasicoherent sheaves of graded commutative algebras 
over the moduli problem of deformations of $\BG_0$ and Frobenius isogenies 
\cite[Theorem B]{cong}.  Here, the Dyer-Lashof algebra is a collection of 
power operations that governs all homotopy operations on commutative 
$E$-algebra spectra \cite[Theorem A]{cong}.  

At height 2, information from the moduli of elliptic curves allows a concrete 
understanding of the power operation structure on Morava E-theories.  Rezk 
computed the first example of a presentation for an E-theory Dyer-Lashof 
algebra, in terms of explicit generators and quadratic relations analogous to 
the Adem relations \eqref{Adem} \cite{h2p2}.  Moreover, he gave a uniform 
presentation, which applies to E-theories at all primes $p$, for the mod-$p$ 
reduction of their Dyer-Lashof algebras \cite[4.8]{mc1}.  Underlying this 
presentation is the Kronecker congruence \eqref{Kronecker} \cite[Proposition 
3.15]{mc1}.  

Our second main result provides an ``integral lift'' of Rezk's mod-$p$ 
presentation, with a different set of generators, in the same sense that 
Theorem \ref{thm:me} above lifts the Kronecker congruence.  We begin with the 
following.  

\begin{thm}
 \label{thm:po}
 Let $E$ be a Morava E-theory spectrum of height $2$ at the prime $p$.  There 
 is an additive total power operation 
 \[
  \begin{split}
          \p\co E^0 \to & ~E^0(B\Sigma_p)/I \\
   W(\cF_p)\lb h\rb \to & ~W(\cF_p)\lb h,\A\rb\,\big/\big(w(h,\A)\big) 
  \end{split}
 \]
 where $I$ is a transfer ideal.  
 
 \begin{enumerate}[label={\em(\roman*)}]
  \item The polynomial 
  \[
   w(h,\A) = w_{p+1}\A^{\,p+1}+\cdots+w_1\A+w_0 \hskip2cm w_i \in E^0 
  \]
  can be given as \eqref{me} from Theorem \ref{thm:me}.  In particular, 
  $w_{p+1} = 1$, $w_1 = -h$, $w_0 = (-1)^{\,p+1}p$, and the remaining 
  coefficients 
  \[
   w_i = (-1)^{\,p\,(\,p-i+1)}\! 
   \left[\ch{p}{i-1}\!+(-1)^{\,p+1}\,\!p\,\!\ch{\,p\,}{i}\right] 
  \]
  
  \item The image $\p(h) = \sum_{i=0}^pQ_i(h)\,\A^i$ is then given by 
  \[
   \p(h) = \A+\sum_{i=0}^p\A^i\sum_{\T=1}^pw_{\T+1}\,d_{i,\T} 
  \]
  where 
  \[
   d_{i,\T} = \sum_{n=0}^{\T-1}(-1)^{\T-n}\,w_0^n\sum_{\stackrel{\scriptstyle 
   m_1+\cdots+m_{\T-n}=\T+i}{\stackrel{\scriptstyle 1\,\leq\,m_s\,\leq\,p+1} 
   {m_{\T-n}\,\geq\,i+1}}}\!w_{m_1}\cdots w_{m_{\T-n}} 
  \]
  In particular, $Q_0(h)\equiv h^{\,p}\md p$.  
 \end{enumerate}
\end{thm}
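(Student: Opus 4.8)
The plan is to treat part~(i) as a direct expansion and to reserve the substance for part~(ii), where the power operation is read off from the modular equation \eqref{me} together with the Atkin--Lehner duality recorded in the remark after Theorem~\ref{thm:me}. For part~(i), I would expand $(\A + (-1)^p)^p$ by the binomial theorem, multiply by $(\A - p)$, and subtract $(h - p^2 + (-1)^p)\A$, collecting the coefficient of each $\A^i$. The identities $(-1)^{p^2} = (-1)^p$ and $(-1)^{p(p-1)} = 1$ simplify the signs, yielding $w_{p+1} = 1$ from the leading $\A\cdot\A^p$ term and $w_0 = (-1)^{p+1}p$ from the constant term; since the linear correction only touches $\A^1$, one finds $w_1 = -h$ after the $(-1)^p$ and $p^2$ contributions cancel. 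For $2 \le i \le p$ only the two products $\ch{p}{i-1}(\,(-1)^p)^{p-i+1}$ and $-p\,\ch{p}{i}(\,(-1)^p)^{p-i}$ survive, and factoring out $(-1)^{p(p-i+1)}$ rearranges them into the stated closed form. This step is routine.

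For part~(ii), I would first identify the map. By Rezk's theorem and Strickland's representability \cite{Str97, cong}, $\psi^p$ classifies the universal degree-$p$ isogeny out of $\BG$, so $\psi^p(h) = \th$ is the deformation parameter of the quotient formal group, the height-$2$ analogue of the target $j$-invariant. The Atkin--Lehner involution then supplies a second copy of \eqref{me}, namely $w(\th, \tA) = 0$ with $\A\,\tA = w_0 = (-1)^{p+1}p$. Since $w_1 = -h$ is the only coefficient in which the source parameter appears, this relation is linear in $\th$; multiplying the putative answer by $\tA$ and using $\A\,\tA = w_0$ to absorb the constant term, one checks directly that
\[
 \psi^p(h) = \A + \sum_{\T = 1}^p w_{\T+1}\,\tA^{\,\T}
\]
is the unique solution in $A_1$ (valid because $\tA$ is a non-zero-divisor, $A_1$ being the regular local ring of $[\G_0(p)]$ by \cite{KM}).

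It remains to rewrite each $\tA^{\,\T}$ in the $A_0$-basis $\{1, \A, \dots, \A^p\}$ of $A_1$. Factoring $\A$ out of the non-constant part of \eqref{me} and using $\A\,\tA = w_0$ gives $\tA = -\sum_{i=0}^p w_{i+1}\A^i$, which is the base case $d_{i,1} = -w_{i+1}$. From $\A\,\tA^{\,\T+1} = w_0\,\tA^{\,\T}$ one then obtains a recursion for the coefficients: multiplying $\tA^{\,\T+1} = \sum_i d_{i,\T+1}\A^i$ by $\A$ and reducing the resulting $\A^{p+1}$ through \eqref{me} propagates the $d_{i,\T}$ and determines the expansion uniquely. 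Unrolling this recursion is exactly the division-by-$\A^{\,\T}$ computation of \cite[Example~2.4]{Choi}: the factor $(-1)^{\T - n} w_0^{\,n}$ counts the $n$ reductions against the constant term $w_0$, and the multiset sum over $m_1 \le \dots \le m_{\T - n}$ with $\sum_s m_s = \T + i$ records the surviving products $w_{m_1}\cdots w_{m_{\T-n}}$ of coefficients. I expect this bookkeeping---matching the iterated reduction to the closed form for $d_{i,\T}$---to be the main obstacle, since the ordered products produced by naive expansion must be reorganized into the weighted multiset sums with their $w_0$-corrections.

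Finally I would reduce modulo $p$, where \eqref{me} collapses to $\A(\A^p - h)$, so that $w_0 \equiv 0$ and $w_j \equiv 0$ except $w_1 \equiv -h$ and $w_{p+1} \equiv 1$. The vanishing of $w_0$ kills every $n \ge 1$ term of $d_{i,\T}$, leaving $d_{i,\T} \equiv (-1)^{\T}\!\sum_{m_1 \le \dots \le m_\T,\ \sum m_s = \T + i} w_{m_1}\cdots w_{m_\T}$. For the constant coefficient $Q_0(h) = \sum_{\T=1}^p w_{\T+1}\,d_{0,\T}$, the constraint $\sum_s m_s = \T$ with each $m_s \ge 1$ forces every $m_s = 1$, so $d_{0,\T} \equiv (-w_1)^{\T} = h^{\T}$; since $w_{\T+1} \equiv 0$ unless $\T = p$, only the term $w_{p+1}\,h^{p} = h^p$ remains, giving $Q_0(h) \equiv h^p \md p$ as claimed. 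This last computation also recovers the Eichler--Shimura relation flagged in the remark, serving as a consistency check on the integral formula.
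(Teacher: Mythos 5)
Your proposal is correct and follows essentially the same route as the paper: part (i) by direct binomial expansion of \eqref{me} together with the identification of $E^0(B\Sigma_p)/I$ with $A_1$, and part (ii) by using $\A\,\tA = w_0$ and the Atkin--Lehner involution to obtain $\psi^p(h) = \th = \A + \sum_{\T=1}^p w_{\T+1}\tA^{\,\T}$, then expanding each $\tA^{\,\T}$ in the $A_0$-basis $\{1,\A,\dots,\A^p\}$ to extract the $d_{i,\T}$. The only difference is cosmetic: you make explicit the recursion $\A\,\tA^{\,\T+1} = w_0\,\tA^{\,\T}$ whose unrolling yields the closed form, where the paper instead defers this bookkeeping to the method of \cite[proof of Proposition 6.4]{ho}.
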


This leads to the second main theorem of the paper.  

\begin{thm}
 \label{thm:DL}
 Continue with the notation in Theorem \ref{thm:po}.  Let $\G$ be the 
 Dyer-Lashof algebra for $E$, which is the ring of additive power operations 
 on $K(2)$-local commutative $E$-algebras.  
 
 Then $\G$ admits a presentation as the associative ring generated over $E^0 
 \cong W(\cF_p)\lb h\rb$ by elements $Q_i$, $0\leq i\leq p$, subject to the 
 following set of relations.  
 
 \begin{enumerate}[label={\em(\roman*)}]
  \item Adem relations 
  \[
   Q_kQ_0 = -\sum_{j=1}^{p-k}w_0^{\,j}\,Q_{k+j}Q_j\,-\sum_{j=1}^p 
   \sum_{i=0}^{j-1}w_0^i\,d_{k,\,j-i}\,Q_iQ_j \hskip2.3cm \text{for 
   $1\leq k\leq p$} 
  \]
  where the first summation is vacuous if $k = p$.  
  
  \item Commutation relations 
  \[
   \begin{split}
    Q_i\,c = & ~(Fc)\,Q_i~~~\text{for $\,c \in W(\cF_p)$ and all $i$, with 
               $F$ the Frobenius automorphism} \\
    Q_0\,h = & ~e_0+(-1)^{\,p+1}r\sum_{m=0}^{p-1}s^me_{p+m+1}+(-1)^{\,p} 
               \Bigg(e_p+r\,e_{2p}+\sum_{m=1}^ps^me_{p+m}\Bigg) \\
             & +\sum_{j=1}^{p-1}\,(-1)^{\,p\,j}\Bigg[e_j+r\,s^{\,p-j}e_{2p}+ 
               r\!\sum_{m=0}^{p-j-1}s^m\big(e_{p+j+m}+(-1)^{\,p+1}e_{p+j+m+1} 
               \big)\Bigg] \\
    Q_k\,h = & ~(-1)^{\,p\,(p-k)}\ch{p}{k}\Bigg(e_p+r\,e_{2p}+\sum_{m=1}^p 
               s^me_{p+m}\Bigg)+\sum_{j=k}^{p-1}\,(-1)^{\,p\,(\,j-k)} 
               \ch{\,j\,}{k}\Bigg[ e_j \\
             & +r\,s^{\,p-j}e_{2p}+r\!\sum_{m=0}^{p-j-1}s^m\big(e_{p+j+m}+ 
               (-1)^{\,p+1}e_{p+j+m+1}\big)\Bigg]~~~\text{for \,\! $0<k<p$} 
               \\
    Q_p\,h = & ~e_p+r\,e_{2p}+\sum_{m=1}^ps^me_{p+m} 
   \end{split}
  \]
  where $r = h-p^2+(-1)^{\,p}$, $s = p+(-1)^{\,p}$, and 
  \[
   \begin{split}
    e_n = & ~\sum_{m=n}^{p+1}(-1)^{(p+1)(m-n)}\ch{m}{n}Q_{m-1} \\
          & +\sum_{m=n}^{2p}(-1)^{(p+1)(m-n)}\ch{m}{n}\!\!\sum_{\stackrel
            {\scriptstyle i+j=m}{0\,\leq\,i,\,j\,\leq\,p}}\sum_{\T=1}^p 
            w_{\T+1}\,d_{i,\T}\,Q_j 
   \end{split}
  \]
  the first summation for $e_n$ being vacuous if $p+2\leq n\leq 2\,p$, and 
  being vacuous in its term $m = 0$ if $n = 0$.  
 \end{enumerate}
\end{thm}

The Dyer-Lashof algebra $\G$ has the structure of a twisted bialgebra over 
$E^0$.  The ``twists'' are described by the commutation relations above.  The 
product structure satisfies the Adem relations.  Certain Cartan formulas give 
rise to the coproduct structure as follows.  

\begin{prop}
 \label{prop:Cartan}
 Let $A$ be any $K(2)$-local commutative $E$-algebra.  There are additive 
 individual power operations $Q_k\co \pi_0(A) \to \pi_0(A)$, $0\leq k\leq p$, 
 which satisfy the following Cartan formulas as well as the Adem and 
 commutation relations from Theorem \ref{thm:DL}.  
 
 For each $0\leq k\leq p$, $Q_k(xy)$ equals the expression on the right-hand 
 side of the commutation relation for $Q_k\,h$, where $r = h-p^2+(-1)^{\,p}$ 
 and $s = p+(-1)^{\,p}$ as above, and 
 \[
  e_n = \sum_{m=n}^{2p}(-1)^{(p+1)(m-n)}\ch{m}{n}\!\!\sum_{\stackrel
  {\scriptstyle i+j=m}{0\,\leq\,i,\,j\,\leq\,p}}\!\!Q_i(x)\,Q_j(y) 
 \]
\end{prop}
\begin{proof}
 In Section \ref{subsec:comm} below proving Theorem \ref{thm:DL}, we will 
 present a proof for the commutation relations, in such a way that the same 
 formal procedure applies to give the stated Cartan formulas.  
\end{proof}

\begin{rmk}
 \label{rmk:recover}
 Theorem \ref{thm:DL} and Proposition \ref{prop:Cartan} recover corresponding 
 earlier results in \cite{h2p2,p3,ho} for $p = 2$, $3$, and $5$ 
 respectively.  
 
 To be more precise, for $p = 3$, the presentations do not 
 coincide but are equivalent.  Cf.~Example \ref{ex:p3N4} and Definition 
 \ref{def:pm} below after we discuss models for the total power operation 
 $\p$.  The equivalency will be addressed in Remark \ref{rmk:choices}.  
 Theorem \ref{thm:po} was stated with respect to a particular basis for the 
 target ring $E^0(B\Sigma_p)/I$ of $\p$ as a free module over $E^0$ of rank 
 $p+1$.  Theorem \ref{thm:DL} was stated with respect to a particular basis 
 for the Dyer-Lashof algebra $\G$ as an associative ring over $E^0$ on 
 $(\,p+1)$ generators.  
 
 For $p = 5$, in \cite[Example 6.1]{ho}, we did not completely determine the 
 Dyer-Lashof algebra due to constraints with our earlier methods (cf.~[\ib, 
 Example 3.5]).  The issue was that, after a quadratic extension of $\BF_5$, 
 the mod-$5$ Hasse invariant factors into a pair of Galois conjugates (see 
 Example \ref{ex:p5N4} below).  We calculated with global modular forms 
 instead of local functions on Lubin-Tate curves.  Thus we were unable to 
 determine the image under $\p$ of $h \in E^0$ (denoted \ib~by $u_1$, with 
 $h$ standing for the global Hasse invariant) which corresponds to a local 
 deformation parameter at one of the two supersingular points.  Nevertheless, 
 in fact, the current and earlier presentations for the Dyer-Lashof algebra 
 in this case coincide formally.  This is a manifest of the functoriality, 
 with respect to base field extension, of the Dyer-Lashof algebra as a moduli 
 space.  
\end{rmk}

\begin{caut}
 In this paper, we use the word ``model'' for two distinct but closely 
 related objects.  One is an algebraic curve with an explicit defining 
 equation as customary in algebraic geometry.  The other is a set of data 
 involving formal groups and elliptic curves which is designed to facilitate 
 explicit calculations in algebraic topology (Definitions \ref{def:cm}, 
 \ref{def:im}, and \ref{def:pm}).  
\end{caut}

\begin{rmk}
 Let $E$ be a Morava E-theory of height $n$.  Recent work of Behrens and Rezk 
 on spectral algebra models for unstable $v_n$-periodic homotopy theory has 
 identified (i) the completed $E$-homology of the $n$'th Bousfield-Kuhn 
 functor applied to an odd-dimensional sphere with (ii) the $E$-cohomology of 
 the $K(n)$-localized Andr\'e-Quillen homology of the spectrum of cochains on 
 the odd-dimensional sphere valued in the $K(n)$-local sphere spectrum 
 \cite[Theorem 8.1]{BKTAQ}.  The former (i) computes unstable $v_n$-periodic 
 homotopy groups of spheres via a homotopy fixed point spectral sequence of 
 Devinatz and Hopkins.  By calculations of Rezk, (ii) can be identified at 
 heights $n = 1$ and $n = 2$ via another spectral sequence, whose $E_2$-page 
 consists of Ext-groups of certain rank-one modules over the Dyer-Lashof 
 algebra of $E$ (see \cite[Example 2.13]{h2}).  
 
 We have applied Theorems \ref{thm:me} and \ref{thm:po} in this context of 
 {\em unstable} chromatic homotopy theory to make these Ext-groups more 
 explicit in \cite{bkos}, with subsequent work partly joint with Guozhen Wang 
 towards general patterns in higher chromatic levels inspired by 
 \cite{mc1,Weinstein}.  
\end{rmk}

\subsection{Outline for the rest of the paper}

In Section \ref{sec:models}, we collect, streamline, and provide an in-depth 
analysis of necessary preliminary materials from \cite[Sections 2.1, 2.2, 
3.1, and 3.3]{ho}.  The presentation here is self-contained.  

After a brief discussion of the homotopy-theoretic setup, we present in 
Sections \ref{subsec:level1} and \ref{subsec:levelp} {\em models} for Morava 
E-theories of height 2 and their power operations, which are built from data 
of formal groups and elliptic curves.  Section \ref{subsec:AL} is an 
interlude on the Atkin-Lehner involution, which becomes important later.  We 
then introduce parameters for the local formal moduli in Section 
\ref{subsec:parameters} and give a detailed analysis of their behavior at the 
cusps of the global moduli, as summarized in Lemma \ref{lem:cusps} and 
Definition \ref{def:pm}.  

Sections \ref{sec:pfag} and \ref{sec:pfat} are devoted to proving the main 
theorems stated in Sections \ref{subsec:modag} and \ref{subsec:modat} above, 
respectively, for the moduli spaces in algebraic geometry and in algebraic 
topology.  The key ingredients for the proof in Section \ref{sec:pfag} are 
Lemmas \ref{lem:cusps} and \ref{lem:expansion}.

\subsection{Acknowledgments}

I thank Mark Behrens, Elden Elmanto, Paul Goerss, Mike Hopkins, Zhen Huan, 
Tyler Lawson, Haynes Miller, Charles Rezk, Joel Specter, Guozhen Wang, and 
Zhouli Xu for helpful discussions and encouragements while the ideas and 
results presented in this paper evolve over the years.  

I thank my friends Tzu-Yu Liu and Meng Yu for their continued support, 
especially during me sketching a preliminary version of Section 
\ref{sec:pfag} in their home in California.  

This work was partially supported by National Natural Science Foundation of 
China grant 11701263.

\section{Modeling Morava E-theories of height $2$ via moduli spaces of 
elliptic curves}
\label{sec:models}

Given a formal group $\BG_0$ over $\cF_p$ of height $2$, let $E$ be the 
Morava E-theory spectrum associated to $\BG_0/\cF_p$ by the 
Goerss-Hopkins-Miller theorem \cite{GH}.  We have 
\[
 \pi_*(E) = W(\cF_p)\lb\mu_1\rb[\mu^{\pm1}] 
\]
with $|\mu_1| = 0$ and $|\mu| = 2$.  

Let $x_{_E} \in \TE^2(\BC P^\infty)$ be a class which extends to a complex 
orientation of $E$, so that $E^*(\BC P^\infty) \cong E^*\lb x_{_E}\rb$ and 
that the formal scheme $\Spf\big(E^0(\BC P^\infty)\big)$ has a group 
structure \cite[Section 1]{coctalos}.  In particular, $x_{_E}\cdot\mu$ is a 
coordinate on this formal group, \ie, a uniformizer for its ring of functions 
\cite[Definition 1.4 and Remark 1.7]{Ando00}.  

The degree-$0$ coefficient ring $E^0 = W(\cF_p)\lb\mu_1\rb$ is the Lubin-Tate 
ring which classifies formal deformations of $\BG_0$ \cite[Theorem 3.1]{LT}.  
The formal group $\Spf\big(E^0(\BC P^\infty)\big)$ is the universal 
deformation $\BG$ of $\BG_0$ over $E^0$.  

As an $E_\infty$-ring spectrum, $E$ affords power operations constructed from 
the extended power functors $D_m(-) \ce 
\big(-\big)^{\wedge_E\,m}_{h\Sigma_m}$ on $E$-modules, for each integer 
$m\geq0$, of taking the $m$-fold smash product over $E$ modulo the action of 
the symmetric group $\Sigma_m$ up to homotopy.  

In particular, we have the (additive) {\em total} $p$-power operation 
\begin{equation}
 \label{total}
 \p\co E^0 \to E^0(B\Sigma_p)/I 
\end{equation}
where $I = \bigoplus_{0<i<p}{\rm im}\big(E^0(B\Sigma_i\times B\Sigma_{p-i}) 
\to E^0(B\Sigma_p)\big)$ is the ideal generated by images of transfers.  It 
gives rise to {\em individual} power operations $Q_i\co \pi_0(A) \to 
\pi_0(A)$, $0\leq i\leq p$, for any $K(2)$-local commutative $E$-algebra $A$ 
(see \cite[Definition I.4.2]{H_infty} and \cite[Definition 3.5]{p3}).

\subsection{Models for an E-theory and Lubin-Tate curves of level $1$}
\label{subsec:level1}

Given such an E-theory above, to carry out explicit calculations for its 
power operations, we work with elliptic curves as models (cf.~\cite[Section 
2]{ho}).  

First, let $C_0$ be a supersingular elliptic curve over $\cF_p$.  Its formal 
completion $\HC_{\!0}$ at the identity section is a formal group of height 
$2$ over $\cF_p$.  By \cite[Th\'eor\`eme IV]{Lazard}, $\HC_{\!0}$ is 
isomorphic to $\BG_0$.  

Next, to associate an elliptic curve to the universal formal deformation 
$\BG$ of $\BG_0$, we apply the Serre-Tate theorem \cite{LST} 
(cf.~\cite[Theorem 2.9.1]{KM}) and construct a universal deformation of the 
elliptic curve $C_0$.  For representability, we equip $C_0$ with a 
level-$\G_1(N)$ structure, $N\geq3$ and $p\nmid N$.  

More precisely, consider the representable moduli problem $\CP_N$ of 
isomorphism classes of smooth elliptic curves over $\BZ[1/N]$ with a choice 
of a point of exact order $N$ and a nonvanishing $1$-form.  Let $\CM_N$ be 
its representing scheme,\footnote{See \cite[Proposition 3.2]{tmf3} and 
\cite[Examples 2.1 and 2.2]{ho} for explicit presentations in the cases when 
$N = 3$, $4$, and $5$.  } which is of relative dimension $1$ over 
$\BZ[1/N]$.  Let $\CC_N$ be the universal elliptic curve over this modular 
curve $\CM_N$.  In general, for $p$ and $N$ large, the supersingular locus of 
$\CM_N$ at $p$ consists of more than one closed point, and $C_0$ is the 
fiber of $\CC_N$ over one of them.  By the Serre-Tate theorem, the formal 
completion $\HCC_N$ of $\CC_N$ at the identity section is isomorphic to the 
universal formal deformation $\BG$ of $\BG_0 \cong \HC_{\!0}$.  

\begin{rmk}
 \label{rmk:indepc}
  We see that up to isomorphism, the E-theory $E$ associated to $\BG_0/\cF_p$ 
  does not depend on the choice of $C_0$ and $\CC_N$ that model its formal 
  group $\BG$.  
\end{rmk}

\begin{defn}{(\cite[Definition 2.8]{ho})}
 \label{def:cm}
 Let $E$ be a Morava E-theory of height $2$ at the prime $p$.  With notation 
 as above, a {\em $\CP_N$-model for $E$} is the following set of data.  
 \begin{enumerate}[label=Mod.\,\arabic*, leftmargin=1.8cm]
  \item \label{m1} a supersingular elliptic curve $C_0$ over $\cF_p$ 
  
  \item \label{m2} a universal deformation $\CC_N$ of $C_0$ over $\CM_N$ 
  
  \item \label{m3} a coordinate $u$ on the formal group $\HCC_N$ 
  
  \item \label{m4} an isomorphism between $\Spf(E^0)$ and the formal 
  completion of $\CM_N$ at the supersingular point corresponding to $C_0$ 
  
  \item \label{m5} an isomorphism between $\Spf\big(E^0(\BC P^\infty)\big)$ 
  and $\HCC_N$ as formal groups over $E^0$ which sends $x_{_E}\cdot\mu$ to 
  $u$ 
 \end{enumerate}
\end{defn}

As discussed above, the existence of the isomorphisms in \ref{m4} and 
\ref{m5} follows from the Lubin-Tate theorem combined with the Serre-Tate 
theorem.  

The formal schemes in \ref{m4} are each of relative dimension $1$ over the 
formal completion $\Spf\big(W(\cF_p)\big)$ of $\Spec(\BZ)$ at $p$.  We call 
them {\em Lubin-Tate curves of level $1$} (the level-$\G_1(N)$ structure is 
auxiliary for our purpose).

\subsection{Models for power operations on $E$ and Lubin-Tate curves of level 
$\Gamma_0(p)$}
\label{subsec:levelp}

By work of Ando, Hopkins, Rezk, and Strickland, power operations on $E$ 
correspond to finite flat subgroup schemes of $\BG$, or equivalently, to 
isogenies from $\BG$ \cite[Theorem B]{cong}.  Thus, again via the Serre-Tate 
theorem, we model the latter by isogenies between elliptic curves in order to 
obtain explicit formulas for the power operations.  

More precisely, as the base ring $E^0$ is $p$-local, we need only work with 
the moduli problems $[\G_0(p^r)]$ of isomorphism classes of elliptic curves 
with a choice of degree-$p^r$ subgroup scheme.  It suffices to analyze 
$[\G_0(p)]$ and its Atkin-Lehner involution.\footnote{See \cite[11.3.1]{KM} 
with more details below in Section \ref{subsec:AL}.  }  Indeed, the ring of 
(additive) power operations on any Morava E-theory is {\em quadratic} 
\cite[Main Theorem and Proposition 4.10]{Koszul}: with ring multiplication 
given by composition, the $p^r$-power operations are generated by the 
$p$-power ones, subject to quadratic relations which describe products of two 
generators.  

The moduli problem $[\G_0(p)]$ is {\em relatively} representable over the 
moduli stack of elliptic curves \cite[4.2 and 5.1.1]{KM}.  In particular, the 
simultaneous moduli problem $\CP_N\times[\G_0(p)]$ is representable by a 
scheme $\CM_{N,\,p}$ which is finite flat over $\CM_N$ of degree $p+1$.  

\begin{rmk}
 \label{rmk:indepi}
 The scheme $\CM_{N,\,p}$ is of relative dimension $1$ over $\BZ[1/N]$.  It 
 is commonly referred to as {\em the} modular curve of level $\G_0(p)$, whose 
 compactification is denoted by $X_0(p)$ in the literature.  Up to base 
 change, this modular curve is independent of the rigidification by $\CP_N$ 
 (for representability), or by any other types of level-$N$ structure 
 (cf.~\cite[Chapter 1, esp.~1.13]{padicprop} where full level-$\G(N)$ 
 structures are used).  Also cf.~\cite[13.4.7]{KM} where $\CP$ rigidifies 
 $[\G_0(p)]$.  In particular, varying $N$ will not change the local equation 
 for $\CM_{N,\,p}$ as a scheme over $\CM_N$.  
\end{rmk}

Now, we construct a universal degree-$p$ isogeny over $\CM_{N,\,p}$ as 
follows (cf.~\cite[Construction 3.1]{ho}).  Over $\CM_{N,\,p}$, let 
$\CG_N^{(p)}$ be the universal example of a degree-$p$ subgroup scheme of 
$\CC_N$ and write $\CC_N^{(p)} \ce \CC_N/\CG_N^{(p)}$ for the quotient 
elliptic curve.  Define $\Psi_N^{(p)}\co \CC_N \to \CC_N^{(p)}$ over 
$\CM_{N,\,p}$ by the formula 
\begin{equation}
 \label{Psi}
 \tu\big(\Psi_N^{(p)}(P)\big) = \prod_{Q\,\in\,\CG_N^{(p)}}u(P-Q) 
\end{equation}
where $u$ is a coordinate on $\CC_N$ at the identity $O$ and $\tu$ is the 
coordinate on $\CC_N^{(p)}$ induced by $u$ (see \cite[Section 4.3]{Ando00}).  

This isogeny $\Psi_N^{(p)}$ is a {\em deformation of Frobenius} in the sense 
that its restriction over the supersingular point is the Frobenius isogeny 
$\Frob\co C_0 \to C_0^{\,(p)}$, as the $p$-torsion subgroup scheme $C_0[p] = 
0$.  

\begin{defn}
 \label{def:im}
 Let $\p$ be the total power operation on $E$ in \eqref{total}.  With 
 notation as above, a {\em $\CP_N$-model for $\p$} is the data of \ref{m1}--5 
 together with the following.  
 \begin{enumerate}[label=Mod.\,\arabic*, leftmargin=1.8cm, start=6]
  \item \label{m6} a universal deformation 
  $\Psi_N^{(p)}\co \CC_N \to \CC_N^{(p)}$ of $\Frob\co C_0 \to C_0^{\,(p)}$ 
  over $\CM_{N,\,p}$ 
  
  \item \label{m7} an isomorphism over $E^0$ between 
  $\Spf\big(E^0(B\Sigma_p)/I\big)$ and the formal completion of $\CM_{N,\,p}$ 
  at the supersingular point corresponding to $C_0$ 
 \end{enumerate}
\end{defn}

\begin{rmk}
 \label{rmk:STStr}
 The existence of the isomorphism in \ref{m7} follows from Strickland's 
 theorem on Morava E-theories of symmetric groups in terms of rings of 
 functions on the formal moduli \cite[Theorem 1.1]{Str98}, combined with the 
 Serre-Tate theorem.  
\end{rmk}

The formal schemes in \ref{m7} are each of relative dimension $1$ over 
$\Spf\big(W(\cF_p)\big)$ and finite flat of degree $p+1$ over $\Spf(E^0)$.  
We call them {\em Lubin-Tate curves of level $\G_0(p)$} (again, the auxiliary 
$N$ is omitted).

\subsection{Atkin-Lehner involution on the global moduli}
\label{subsec:AL}

Instead of a universal deformation of Frobenius, an alternative viewpoint for 
modeling the total $p$-power operation is through an Atkin-Lehner involution.  

Let us consider a $\CP_N$-model for $\p$ as in Definition \ref{def:im}.  With 
notation from last subsection, there is an automorphism on $\CM_{N,\,p}$, 
induced by the map 
\[
 \left(\CC_N,\,P_0,\,du,\,\CG_N^{(p)}\right) \mapsto 
 \left(\CC_N/\CG_N^{(p)},\,\Psi_N^{(p)}(P_0),\,d\tu,\, 
 \CC_N[p]/\CG_N^{(p)}\right) 
\]
of simultaneous level-$\!\big(\CP_N,\G_0(p)\big)$ structures on (distinct) 
elliptic curves, where $\CC_N[p]$ denotes the $p$-torsion subgroup scheme of 
$\CC_N$ (cf.~\cite[11.2 and 11.3.1]{KM}).  We call this automorphism an {\em 
Atkin-Lehner involution} in connection with Atkin and Lehner's theory of 
modular forms on $\G_0(p)$ \cite[Lemmas 7--10]{AL}.  It is an involution 
since $\CC_N[p] \subset \CC_N$ is of degree $p^2$.  

Given $\Psi_N^{(p)}$ on $\CC_N$, the Atkin-Lehner involution of $\CM_{N,\,p}$ 
induces an isogeny on the quotient curve $\CC_N^{(p)} = \CC_N/\CG_N^{(p)}$, 
namely, 
\[
 \widetilde{\Psi}_N^{(p)}\co \CC_N^{(p)} \to 
 \CC_N^{(p)}/\widetilde{\CG}_N^{(p)} 
\]
where $\widetilde{\CG}_N^{(p)} = \CC_N[p]/\CG_N^{(p)}$.  

As the $p$-divisible group of $\CC_N$ sits in a short exact sequence of its 
formal (connected) and \'etale components \cite[(4) in Section 2.2]{pdiv}, 
the Atkin-Lehner involution interchanges these two sorts of formal and 
\'etale degree-$p$ subgroups.  

Via the correspondences in \ref{m4} and \ref{m7}, given any $x \in E^0$, 
$\p(x)$ is the image $\widetilde{x}$ of $x$ under the Atkin-Lehner 
involution.  
\begin{rmk}
 Note that since $\p$ acts on the scalars $W(\cF_p) \subset E^0$ as the 
 $p$-power Frobenius automorphism, it is {\em not} an involution for this 
 subring.  
\end{rmk}

\begin{conv}
 Henceforth whenever we write a tilde $\sim$ over a symbol, we mean the 
 analogue of this symbol under an Atkin-Lehner involution.  
\end{conv}

\subsection{$h$ and $\A$ as deformation and norm parameters in the local 
moduli}
\label{subsec:parameters}

In \cite[7.7--7.8]{KM}, various pairs of parameters are proposed for the 
local ring of $[\G_0(p)]$ at a supersingular point, when one views the moduli 
problem as an ``open arithmetic surface'' (cf.~[\ib, page xiii and 5.1.1]).  
More explicitly, a presentation for this local ring over a perfect field of 
characteristic $p$ is given in [\ib, 13.4.7], which Rezk applied to produce a 
mod-$p$ presentation for the ring of power operations on $E$ in 
\cite[4.8]{mc1}.  

We shall give an {\em integral} presentation for the local ring of 
$[\G_0(p)]$ in terms of the parameters $T$ and ${\bf N}\big(X(P)\big)$ 
from \cite[7.7]{KM}.  

The parameter $T$ is a uniformizer for the Lubin-Tate ring which carries the 
universal formal deformation (cf.~[\ib, 5.2 (Reg.\,4)]).  We therefore call 
it a {\em deformation parameter} for the local moduli of $[\G_0(p)]$.  Via 
\ref{m4}, it corresponds to $\mu_1 \in E^0$.  

Globally over the moduli, the {\em Hasse invariant at $p$} is a modular form 
over $\BF_p$ of level $1$ and weight $p - 1$ (see [\ib, 12.4] and 
\cite[V\,\,\S4]{AEC}).  Up to $p$-torsion, it lifts to an integral modular 
form on $\CC_N / \CM_N$ for all $p$ and all $N\geq3$ prime to $p$ (see 
\cite[Theorem 1.8.1]{Calegari}, \cite[journal p.\,35]{Buzzard}, and 
\cite{Meier}).\footnote{See also \cite[Example 2.6]{ho} for an explicit 
calculation of the Hasse invariant when $p = 5$ and $N = 4$ as well as an 
illustration of the global-local moduli.  We will return to this below in 
Example \ref{ex:p5N4}.  }  

The Hasse invariant vanishes precisely over the mod-$p$ supersingular locus.  
Its restriction to a formal neighborhood of a supersingular point equals a 
deformation parameter $T$ \cite[12.4.4]{KM}.  

\begin{conv}
 For the reason above, henceforth we will write $h$ (the initial letter of 
 ``Hasse'') for a deformation parameter and also for its corresponding 
 element in $E^0 \subset E^0(B\Sigma_p)/I$.   
\end{conv}

The other parameter ${\bf N}\big(X(P)\big)$ is constructed as a norm 
[\ib, 7.5.2], where $X$ is a coordinate on the formal group of a universal 
elliptic curve, and $P$ a point on the curve of exact order $p$ (see 
[\ib, 5.4]).  We call ${\bf N}\big(X(P)\big)$ a {\em norm parameter} for the 
local moduli of $[\G_0(p)]$.  Via \ref{m7}, it should correspond to an 
element in $E^0(B\Sigma_p)/I$ whose powers generate this ring as a free 
module over $E^0$ of rank $p+1$ (the degree of $\CM_{N,\,p}$ over $\CM_N$) by 
Weierstrass preparation.  

Observe that from the definition \eqref{Psi} of the universal degree-$p$ 
isogeny $\Psi_N^{(p)}$ over $\CM_{N,\,p}$, setting 
\begin{equation}
 \label{A}
 \A \ce \prod_{Q\,\in\,\CG_N^{(p)}-\{O\}}u(Q) 
\end{equation}
we obtain a norm parameter for $[\G_0(p)]$ associated with the coordinate 
$u$.  This {\em $\G_0(p)$-norm} is a modular form on $\CC_N/\CM_{N,\,p}$.  We 
will also write $\A$ for the aforementioned element in $E^0(B\Sigma_p)/I$ 
which corresponds to the restriction of this modular form over the formal 
neighborhood of the supersingular point.\footnote{This is a footnote for the 
expert.  In \cite{p3, ho}, we chose distinct symbols $\K$ for the modular 
form constructed as a $\G_0(p)$-norm, and $\A$ for the modular function as a 
multiple of $\K$ by modular unit of weight $p$, passing from a weighted 
projective space to one of its affine local charts (cf.~last footnote).  As 
this perspective is not involved with addressing the central problem of the 
current paper, to ease notation and make the exposition more accessible, here 
we have suppressed the difference of symbols.  This global-to-local procedure 
underlies \ref{m4}, \ref{m5}, and \ref{m7} as well as the choice of a 
nonvanishing $1$-form in the moduli problem $\CP_N$ (cf.~\cite[8.1.7.1]{KM}, 
we work locally with $\BG_m\backslash\CP_N = 
\BG_m\!\left\backslash\big(\G_1(N)\times[\o]\big)\right. = \G_1(N)$ auxiliary 
to $[\G_0(p)]$).  }  

Thus $E^0 \cong W(\cF_p)\lb h\rb$ and by Weierstrass preparation there exists 
a unique monic polynomial $w(h,\A)$ in $\A$ of degree $p+1$ with coefficients 
in $E^0$ such that 
\begin{equation}
 \label{ha}
 E^0(B\Sigma_p)/I \cong E^0[\A]\,\big/\big(w(h,\A)\big) = 
 W(\cF_p)\lb h,\A\rb\,\big/\big(w(h,\A)\big) 
\end{equation}

\begin{rmk}
 \label{rmk:cot}
 We observe that this norm parameter $\A$ is the multiple which defines the 
 relative cotangent map at the identity along $\Psi_N^{(p)}$, \ie, 
 $\big(\Psi_N^{(p)}\big)^{\!*}d\tu = \A\cdot du$.  
\end{rmk}

\subsection{The norm parameters $\A$ and $\tA$ near the cusps of $X_0(p)$}
\label{subsec:cusps}

Given the geometric interpretation for the norm parameter $\A$ from Remark 
\ref{rmk:cot}, we next consider the compactified moduli 
$\CMB_{N,\,p}$ over $\CMB_N$ and determine the values of $\A$, as a modular 
form, at the cusps of $\CMB_{N,\,p}$ (cf.~\cite[1.13 and 1.11]{padicprop}).  

Recall that $\CMB_{N,\,p}-\CM_{N,\,p}$ is finite \'etale over $\BZ[1/N]$.  
Over $\BZ[1/N, \zeta_N]$ with $\zeta_N$ a primitive $N$'th root of unity, it 
is a disjoint union of sections, called the cusps of $\CMB_{N,\,p}$, two of 
which lie over each cusp of $\CMB_N$.  Among each pair of the two cusps, one 
is \'etale over $\CMB_N$, which corresponds to the (\'etale) subgroups $H_i$ 
of the Tate curve $\Tate(q^N)$ generated by $(\zeta_p^i \, q^{1/p})^N$, with 
$\zeta_p$ a primitive $p$'th root of unity and $i = 0$, $1$, \ldots, $p-1$.  
The other cusp is ramified over $\CMB_N$, corresponding to the (formal) 
subgroup $\mmu_p$ of $\Tate(q^N)$ generated by $\zeta_p$ so that the quotient 
$\Tate(q^N)/\mmu_p = \Tate(q^{N\,\!p})$.  These appear in the literature as 
the unramified and ramified cusps of $X_0(p)$, respectively, without 
reference to the auxiliary level-$N$ structure.  

Recall from Section \ref{subsec:AL} that the Atkin-Lehner involution on 
$\CM_{N,\,p}$ interchanges the two sorts of \'etale and formal degree-$p$ 
subgroups of the $p$-divisible group.  It thus extends to an automorphism on 
$\CMB_{N,\,p}$ which interchanges the two sorts of unramified and ramified 
cusps, respectively.  

In [\ib, 1.11], to discuss Hecke operators, Katz gave a detailed analysis of 
the degree-$p$ isogenies $\pi$ from $\Tate(q^N)$, each with one of the above 
subgroups as kernel.  They are defined over the punctured formal neighborhood 
$\BZ[1/N,\zeta_N][1/p,\zeta_p]\lp q^{1/p}\rp$ around the cusps.  In 
particular, Katz calculated the cotangent maps to their dual isogenies 
$\widecheck\pi$ and obtained $\widecheck\pi^*(\o_\can) = \o_\can$ near the 
ramified cusps whereas $\widecheck\pi^*(\o_\can) = p\cdot\o_\can$ near the 
unramified cusps [\ib, lines 4--5 on book p.\,91].  Here, $\o_\can$ is the 
canonical $1$-form on a Tate curve (cf.~\cite[T.2 in 8.8]{KM}).  

Recall from last subsection that the norm parameter $\A$ is the multiple 
which defines the cotangent map to the universal degree-$p$ isogeny 
$\Psi_N^{(p)}$ over $\CM_{N,\,p}$.  We now determine the values of $\A$, as a 
modular form, at the cusps from the above calculation of Katz.  For this, we 
need to carefully analyze the dual isogenies and $1$-forms involved, by 
imposing conditions on the supersingular elliptic curve $C_0$ and the 
coordinate $u$ on $\CC_N$ of a $\CP_N$-model as follows (cf.~\cite[Section 
3.3]{ho}).  

Let us introduce the following strengthened version of \ref{m1}.  
\begin{enumerate}[label=Mod.\,1$^\text{+}$, leftmargin=2.1cm]
 \item \label{m1+} a supersingular elliptic curve $C_0$ over $\BF_{p^2}$ 
 whose $p^2$-power Frobenius endomorphism equals the map of multiplication by 
 $(-1)^{\,p-1}p$, that is, $\Frob^2 = (-1)^{\,p-1}[p]$ 
\end{enumerate}

\begin{rmk}
 \label{rmk:Poonen}
 By \cite{Poonen}, given any supersingular elliptic curve $C/\cF_p$, there 
 exists such a $C_0$ isomorphic to $C$ over $\cF_p$ (cf.~\cite[Lemma 
 3.21]{Poonenetal}, \cite[Remark 3.3]{p3}, and \cite[3.8]{mc1}).  In 
 particular, the dual isogeny of $\Frob\co C_0 \to C_0^{\,(p)}$ is the 
 Frobenius isogeny out of $C_0^{\,(p)}$ for all $p \neq 2$.  
\end{rmk}

\begin{lem}
 \label{lem:cusps}
 Let $C_0$ be in \ref{m1+} and $\CC_N$ be in \ref{m2}.  Then there exists a 
 coordinate $u^\text{+}$ on $\CC_N$ such that its associated modular form 
 $\A$ in \eqref{A} equals $p$ at the ramified cusps and $(-1)^{\,p-1}$ at the 
 unramified cusps of $\CMB_{N,\,p}$.  The definition of $u^\text{+}$ may 
 require an extension of scalars on $\CM_{N,\,p}$.  
\end{lem}
\begin{proof}
 Let $[X,Y,Z]$ be the homogeneous Weierstrass coordinates of $\CC_N$, with 
 the identity section $O = [0,1,0]$.  Let $u = X/Y$ so that $u(O) = 0$.  Then 
 locally $u$ is a coordinate on the formal group $\HCC_N$ 
 (cf.~\cite[IV\,\,\S1]{AEC}).  
 
 Let $\Psi_N^{(p)}\co \CC_N \to \CC_N^{(p)}$ over $\CM_{N,\,p}$ be the 
 universal degree-$p$ isogeny from \eqref{Psi} constructed using the above 
 coordinate $u$.  Recall from earlier in this subsection Katz's calculation 
 of degree-$p$ isogenies on $\Tate(q^N)$ over 
 $\BZ[1/N,\zeta_N][1/p,\zeta_p]\lp q^{1/p}\rp$.  Up to a unique isomorphism 
 $\sigma_p$, the universal isogeny $\Psi_N^{(p)}$ restricts over a punctured 
 disc around a ramified cusp as 
 \[
  \pi_p\co \Tate(q^N) \to \Tate(q^N)/\mmu_p = \Tate(q^{N\,\!p}), \qquad~~ 
  q\mapsto q^{\,p}~~~ 
 \]
 with $\widecheck\pi_p^*(\o_\can) = \o_\can$ and thus $\pi_p^*(\o_\can) = 
 p\cdot\o_\can$.  Around an unramified cusp $\Psi_N^{(p)}$ restricts up to a 
 unique isomorphism $\sigma_0$ as 
 \[
  \pi_0\co \Tate(q^N) \to \Tate(q^N)/H_0 = \Tate(q^{N/p}), \qquad q\mapsto 
  q^{1/p} 
 \]
 with $\widecheck\pi_0^*(\o_\can) = p\cdot\o_\can$ and thus $\pi_0^*(\o_\can) 
 = \o_\can$.  
 
 Let $\lambda$ be the unit in $\BZ[1/N,\zeta_N]$ such that $du$ restricts to 
 $\lambda\cdot\o_\can$ on $\Tate(q^N)$.  Let $\nu_p$ and $\nu_0$ be the units 
 in $\BZ[1/N,\zeta_N][1/p,\zeta_p]$ such that $d\tu$ restricts to 
 $\nu_p\cdot\lambda^p\cdot\o_\can$ on $\Tate(q^{N\,\!p})$ and to 
 $\nu_0\cdot\lambda^p\cdot\o_\can$ on $\Tate(q^{N/p})$.  The latter two units 
 arise from the isomorphisms $\sigma_p$ and $\sigma_0$ respectively.  Since 
 $\big(\Psi_N^{(p)}\big)^{\!*}d\tu = \A\cdot du$, comparing this identity to 
 those in last paragraph, we see that $\A = \nu_p\lambda^{p-1}p$ at a 
 ramified cusp and $\A = \nu_0\lambda^{p-1}$ at an unramified cusp.  
 
 On the other hand, over the chosen supersingular point in the $\CP_N$-model, 
 by construction $\Psi_N^{(p)}$ restricts to $\Frob\co C_0 \to C_0^{\,(p)}$.  
 Thus by rigidity \cite[2.4.2]{KM} the identity $\Frob^2 = (-1)^{\,p-1}[p]$ 
 from \ref{m1+} lifts to 
 \begin{equation}
  \label{pp}
  \widetilde{\Psi}_N^{(p)}\circ\Psi_N^{(p)} = \tau\circ(-1)^{\,p-1}[p] 
 \end{equation}
 where $\widetilde{\Psi}_N^{(p)}\co \CC_N^{(p)} \to 
 \CC_N^{(p)}/\widetilde{\CG}_N^{(p)}$ is the Atkin-Lehner involution of 
 $\Psi_N^{(p)}$ from Section \ref{subsec:AL}, with $\widetilde{\CG}_N^{(p)} = 
 \CC_N[p]/\CG_N^{(p)}$, and $\T\co \CC_N/\CC_N[p] \to 
 \CC_N^{(p)}/\widetilde{\CG}_N^{(p)}$ is the canonical isomorphism inducing 
 the identity map on relative cotangent spaces.  Comparing \eqref{pp} to 
 $\widetilde{\pi}_p\circ\pi_p = \widetilde{\pi}_0\circ\pi_0 = [p]$ around the 
 cusps \cite[last line on book p.~90 and lines 1--3 on p.~91]{padicprop}, we 
 see that $v_0\cdot v_p = (-1)^{\,p-1}$ as $\widetilde{v}_p = v_0$.  
 
 Set $u^\text{+} \ce \nu_p^{-1/(p-1)}\lambda^{-1}\cdot u = 
 -\nu_0^{-1/(p-1)}\lambda^{-1}\cdot u$.  It is straightforward to check that 
 the $\G_0(p)$-norm $\A$ associated to this coordinate $u^\text{+}$ takes the 
 desired values at the cusps.  
\end{proof}

\begin{ex}[{\cite[Section 3]{h2p2}}]
 \label{ex:p2N3}
 Let $p = 2$ and $N = 3$.  Rezk worked with a $\CP_3$-model where $C_0\co 
 y^2+y = x^3$ in \ref{m1+}, corresponding to the unique mod-$2$ supersingular 
 point in the moduli.  The universal curve $\CC_3\co y^2+axy+y = x^3$ in 
 \ref{m2} has a chosen $3$-torsion point $(0,0)$ (cf.~\cite[Proposition 
 3.2]{tmf3}).  He then set $u = x/y$ in \ref{m3}, which has the property in 
 Lemma \ref{lem:cusps}.  Indeed, his deformation parameter $a$ and norm 
 parameter $d$ satisfy $d^3-ad-2 = 0$.  This factors into $(d-2)(d+1)^2 = 0$ 
 if $a = 3$, which is an integral lift for the Hasse invariant $1$ of the 
 Tate curve.  
\end{ex}

\begin{ex}[{\cite[Sections 2.1--3.1]{p3}}]
 \label{ex:p3N4}
 Let $p = 3$ and $N = 4$.  We worked with a $\CP_4$-model where $C_0\co 
 y^2+xy-y = x^3-x^2$ in \ref{m1} ($\Frob^2 = [-3]$), corresponding to the 
 unique mod-$3$ supersingular point in the moduli.  The universal curve 
 $\CC_4\co y^2+axy+aby = x^3+bx^2$ in \ref{m2} has a chosen $4$-torsion point 
 $(0,0)$.  We set $u = x/y$ in \ref{m3}.  Our deformation parameter $h = 
 a^2+b$ and norm parameter $\A$ satisfy $\A^4-6\A^2+(h-9)\A-3 = 0$, which 
 factors into $(\A-3)(\A+1)^3 = 0$ if $h = 1$.  
\end{ex}

\begin{ex}[{\cite[Sections 2.1--3.1]{ho}}]
 \label{ex:p5N4}
 Let $p = 5$ and $N = 4$.  We worked with a $\CP_4$-model where $\CC_4$ is 
 the same curve as in Example \ref{ex:p3N4}.  Its Hasse invariant at the 
 prime $5$ factors as 
 \[
  a^4-a^2b+b^2 = \big(a^2+2(1+\eta)b\big)\big(a^2+2(1-\eta)b\big) 
 \]
 over $\cF_5$ with $\eta^2 = 2$.  Thus the mod-$5$ supersingular locus 
 consists of two closed points.  We chose $C_0$ in \ref{m1+} which 
 corresponds to the first factor, and again $u = x/y$ in \ref{m3}.  
 
 Setting $h = a^4-16a^2b+26b^2$ as an integral lift of the Hasse invariant 
 above, we were only able to calculate that 
 \begin{equation}
  \label{p5N4}
  \A^6-10\A^5+35\A^4-60\A^3+55\A^2-h\A+5 = 0 
 \end{equation}
 where $\A$ is the (global) modular form constructed as a norm with the 
 coordinate $u$.  We were unable to deduce an equation for a lift of the 
 factor $a^2+2(1+\eta)b$ of the Hasse invariant.  
 
 If we further set $h = 26\equiv1\md5$, \eqref{p5N4} then factors into 
 $(\A-5)(\A-1)^5 = 0$.  Thus $u$ satisfies the property in Lemma 
 \ref{lem:cusps}.  The relation \eqref{p5N4} should specialize to one for the 
 corresponding deformation and norm parameters at the chosen supersingular 
 point.  
\end{ex}

\begin{ex}[{\cite{RezkCorr15}}]
 \label{ex:p5N3}
 Let $p = 5$ and $N = 3$.  We worked with a $\CP_3$-model where $\CC_3$ is 
 the same curve as in Example \ref{ex:p2N3}.  Its Hasse invariant at the 
 prime $5$ factors as 
 \[
  -a^4-a = -a(a+1)(a^2-a+1) 
 \]
 Thus the mod-$5$ supersingular locus consists of three closed points.  We 
 chose $C_0$ in \ref{m1} which corresponds to the first factor ($\Frob^2 = 
 [-5]$) and $u = x/y$ in \ref{m3}.  Setting $h = -a^4+19a$ as an integral 
 lift of the Hasse invariant above, we calculated that 
 \begin{equation}
  \label{p5N3}
  \A^6-5a\A^4+40\A^3-5a^2\A^2-h\A-5 = 0 
 \end{equation}
 where $\A$ is the (global) modular form constructed as a norm with the 
 coordinate $u$.  This factors into $(\A+5)(\A-1)^5 = 0$ if we set $a = 3$ so 
 that $h = -24\equiv1\md5$.  Again, the relation \eqref{p5N3} should 
 specialize to one for the corresponding deformation and norm parameters at 
 the chosen supersingular point, which is equivalent to the relation from 
 Example \ref{ex:p5N4} as an equation for a Lubin-Tate curve of level 
 $\G_0(5)$.  
\end{ex}

\begin{rmk}
 \label{rmk:Ando}
 The coordinate $u^\text{+}$ in Lemma \ref{lem:cusps} (and $u$ in Example 
 \ref{ex:p3N4}) should restrict to a distinguished coordinate on the formal 
 group $\HCC_N$ as well as to one on the formal group of $\Tate(q^N)$, which 
 was originally studied by Ando \cite[Theorem 2.5.7]{Ando95}.  We have given 
 an exposition of this fact in \cite[Section 3.3]{ho}, with more details and 
 greater generality in \cite{nc}.  Our results in the current paper are 
 independent of the existence of Ando's coordinates.  
\end{rmk}

\begin{defn}[{\cite[Definition 3.29]{ho}}]
 \label{def:pm}
 Let $\p$ be the total power operation on $E$ in \eqref{total}.  Continuing 
 with Definition \ref{def:im}, we call the following set of data a {\em 
 preferred model for $\p$}.  
 \begin{enumerate}[label=Mod.\,\arabic*$^\text{+}$, leftmargin=2.1cm, start=3]
  \item [Mod.\,0$^\text{\textcolor{white}{+}}$] an integer $N\geq3$ which is 
  prime to $p$ 
  
  \item [Mod.\,1$^\text{+}$] a supersingular elliptic curve $C_0$ over 
  $\BF_{p^2}$ whose $p^2$-power Frobenius endomorphism equals the map of 
  multiplication by $(-1)^{\,p-1}p$, that is, $\Frob^2 = (-1)^{\,p-1}[p]$ 
  
  \item [Mod.\,2$^\text{\textcolor{white}{+}}$] a universal deformation 
  $\CC_N$ of $C_0$ over $\CM_N$ 
  
  \item \label{m3+} a coordinate $u$ on the formal group $\HCC_N$ which 
  extends to a coordinate on $\CC_N$ satisfying the property that the 
  associated modular form of $\G_0(p)$-norm equals $p$ at the ramified cusps 
  and $(-1)^{\,p-1}$ at the unramified cusps of $\CMB_{N,\,p}$.  
  
  \item [Mod.\,4$^\text{\textcolor{white}{+}}$] an isomorphism between 
  $\Spf(E^0)$ and the formal completion of $\CM_N$ at the supersingular point 
  corresponding to $C_0$ 
  
  \item [Mod.\,5$^\text{\textcolor{white}{+}}$] an isomorphism between 
  $\Spf\big(E^0(\BC P^\infty)\big)$ and $\HCC_N$ as formal groups over $E^0$ 
  which sends $x_{_E}\cdot\mu$ to $u$ 

  \item [Mod.\,6$^\text{\textcolor{white}{+}}$] a universal deformation 
  $\Psi_N^{(p)}\co \CC_N \to \CC_N^{(p)}$ of $\Frob\co C_0 \to C_0^{\,(p)}$ 
  over $\CM_{N,\,p}$ 
  
  \item [Mod.\,7$^\text{\textcolor{white}{+}}$] an isomorphism over $E^0$ 
  between $\Spf\big(E^0(B\Sigma_p)/I\big)$ and the formal completion of 
  $\CM_{N,\,p}$ at the supersingular point corresponding to $C_0$ 
 \end{enumerate}
\end{defn}

As discussed above, the existence of the curve in \ref{m1+} follows from 
Remark \ref{rmk:Poonen} and the existence of the coordinate in \ref{m3+} 
follows from Lemma \ref{lem:cusps}.  

Analogous to \cite[Corollary 3.2]{p3}, note that given a preferred model we 
have 
\begin{equation}
 \label{AA}
 \A \cdot \tA = (-1)^{\,p-1}p 
\end{equation}
where $\tA$ is the Atkin-Lehner involution of the norm parameter $\A$, itself 
also a norm parameter (cf.~\cite[row 7 for {$[\G_0(p^n)]$} of table in 
7.7]{KM}).  
 
\begin{rmk}
 \label{rmk:choices}
 Given Definition \ref{def:pm}, let us summarize at this point the dependence 
 on the various choices made therein of the stated formulas in Theorems 
 \ref{thm:me}, \ref{thm:po}, and \ref{thm:DL}.  
 
 Up to isomorphism, the height-$2$ Morava E-theory spectrum $E$ is 
 independent of the choice \ref{m1+} (or \ref{m1}) by Lazard's theorem and 
 the Goerss-Hopkins-Miller theorem, and independent of \ref{m2} by the 
 Lubin-Tate theorem (Remark \ref{rmk:indepc}).  In particular, different 
 choices between \ref{m1} and \ref{m1+} may result in different but 
 equivalent formulas in the theorems (see, \eg, the case $p = 3$ in Remark 
 \ref{rmk:recover}).  
 
 The choice \ref{m2}, \ie, for different values of $N$, does not affect the 
 coefficients in the modular equation \eqref{me} as long as \ref{m1+} (or 
 \ref{m1}) has been fixed (Remark \ref{rmk:indepi}).  Consequently, the 
 formulas in Theorems \ref{thm:po} and \ref{thm:DL} are independent of $N$.  
 
 The choice \ref{m3+} (or \ref{m3}) does affect the coefficients in 
 \eqref{me} as its parameter $\A$ is built explicitly using a coordinate 
 $u$.  Different choices result in different bases in Theorem \ref{thm:po} 
 for the target ring $E^0(B\Sigma_p)/I$ of $\p$ as a free module over $E^0$, 
 and different bases in Theorem \ref{thm:DL} for the Dyer-Lashof algebra $\G$ 
 as an associative ring over $E^0$.  This choice \ref{m3+} can in fact be 
 further strengthened and made unique, though we do not need this here 
 (Remark \ref{rmk:Ando}).  
 
 The rest \ref{m4}--7 of the list are formal, which will not affect the 
 formulas once the choices above have been made.  
 
 As we shall see in the proofs of the theorems below in Sections 
 \ref{sec:pfag} and \ref{sec:pfat}, for a fixed prime $p$, all {\em 
 preferred} models for $\p$ from Definition \ref{def:pm} will give the {\em 
 same} formulas as stated in the theorems.  
\end{rmk}

\section{Proof of Theorem \ref{thm:me}}
\label{sec:pfag}

Choose any preferred model as in Definition \ref{def:pm}.  We may assume that 
the $j$-invariant for the supersingular point of this model lies in $\BF_p$.  
In fact, by a theorem of Deuring (see \cite[Theorem 14.18, combined with 
Proposition 14.15]{Cox}), there exists a supersingular elliptic curve over 
$\BF_p$ for every $p>3$.  This is also true for $p\leq3$ as shown by explicit 
examples \cite[beginning of V\,\,\S4 and Example 4.5]{AEC}.  Thus the 
corresponding $j$-invariant lies in $\BF_p$.  Since the condition on 
$\Frob^2$ in \ref{m1+} involves at most a substitution of supersingular 
curves via an isomorphism over $\cF_p$ (see Remark \ref{rmk:Poonen}), the 
chosen $j$-invariant remains in $\BF_p$.  

Let $j_0 \in \BZ$ be a lift of this supersingular $j$-invariant.  

In the scheme $\CM_{N,\,p}$ representing the simultaneous moduli problem 
$\CP_N \times [\G_0(p)]$, consider a formal neighborhood $U$ which contains 
this single supersingular point.  Note that $U \cong \Spf(A_1)$ by the 
Serre-Tate theorem (see Remark \ref{rmk:STStr}).  

Define a modular function $h \ce j-j_0$, where $j\,(z) = q^{-1}+744+O(q)$ 
with $q = e^{2\pi iz}$ as usual.  Since $j_0$ lifts a supersingular 
$j$-invariant, $h$ then serves as a deformation parameter for $A_0$ and 
$A_1$.  Locally, modulo $p$, it is a restriction of the Hasse invariant 
(cf.~\cite[12.4.4]{KM}).  

Let $\A$ be the norm parameter (locally near the supersingular point) for 
$A_1$ associated with \ref{m3+} of this model.  By Weierstrass preparation 
there exists a unique polynomial 
\begin{equation}
 \label{w}
 w(h,\A) = \A^{\,p+1}+\sum_{i=0}^pw_i\,\A^i 
\end{equation}
with $w_i \in W(\cF_p)\lb h\rb$ such that $A_1 \cong 
A_0[\A]\,\big/\big(w(h,\A)\big)$ (cf.~\eqref{ha}).  

Write $\th$ and $\tA$ for the images of $h$ and $\A$ under the Atkin-Lehner 
involution.  Note that, as a function on the Lubin-Tate curve $\Spf(A_1)$, 
$\tA$ is only locally defined over the moduli scheme $\CM_{N,\,p}$.  
Nevertheless, by construction as a norm parameter, it is the restriction of 
a globally defined modular form (the local uniformizer $u$ in the 
construction can be taken as a fraction $x/y$ between the affine Weierstrass 
coordinates $x$ and $y$, up to a constant unit depending only on the elliptic 
curve $\CC_N$, cf.~the proof of Lemma \ref{lem:cusps}).  This local function 
$\tA$ thus has a $q$-expansion (at the unramified cusp $\infty$), though it 
may differ from the expansion of the global function.  

The following technical lemma is crucial to our proof of the theorem.  

\begin{lem}
 \label{lem:expansion}
 The local function $\tA$ has a $q$-expansion 
 \[
  \tA(z) = \mu_0\,q^{-1}+O(1) = \mu_0(q^{-1}+a_0)+O(q) 
 \]
 for some $\mu_0 \in W(\cF_p)^\times\cap\BZ$ and $a_0 \in \BZ$ such that 
 $a_0\equiv744-j_0\md p$.  
\end{lem}
\begin{proof}
 First, note that $\tA$ has integral coefficients in its $q$-expansion, since 
 $\A$ does by the $q$-expansion principle \cite[Corollary 1.6.2]{padicprop} 
 applied locally (in the sense above).  
 
 Recall from Remark \ref{rmk:cot} that the parameter $\A$ has a geometric 
 interpretation as the multiple in the cotangent map along the $p$-power 
 isogeny $\Psi_N^{(p)}\co \CC_N \to \CC_N^{(p)} = \CC_N/\CG_N^{(p)}$.  Under 
 the Atkin-Lehner involution, the parameter $\tA$ then gives the cotangent 
 map along $\widetilde{\Psi}_N^{(p)}\co \CC_N^{(p)} \to 
 \CC_N^{(p)}/\widetilde{\CG}_N^{(p)}$ where $\widetilde{\CG}_N^{(p)} = 
 \CC_N[p]/\CG_N^{(p)}$.  In particular, by rigidity, we have an identity 
 \[
  \widetilde{\Psi}_N^{(p)}\circ\Psi_N^{(p)} = \T\circ(-1)^{\,p-1}[p] 
 \]
 that lifts $\Frob^2 = (-1)^{\,p-1}[p]$ over the supersingular point, where 
 $\T\co \CC_N/\CC_N[p] \to \CC_N^{(p)}/\widetilde{\CG}_N^{(p)}$ is the 
 canonical isomorphism.  Thus, up to a unit in the global sections of 
 $\CM_{N,\,p}$, $\widetilde{\Psi}_N^{(p)}$ can be identified with the 
 (Verschiebung) isogeny dual to the (Frobenius) isogeny $\Psi_N^{(p)}$ 
 (cf.~the proof of Lemma \ref{lem:cusps}).  
 
 On the other hand, recall the Hasse invariant as defined in 
 \cite[12.4.1]{KM}, modulo $p$, from the tangent map to Verschiebung.  Since 
 {\em locally} the deformation parameter $h = j-j_0$ is an integral lift of 
 the Hasse invariant, we have 
 \begin{equation}
  \label{cong}
  \tA\equiv\mu\,h\md(p,\A) 
 \end{equation}
 for some unit $\mu \in A_0/(p)$.  From this congruence, we deduce the 
 $q$-expansion for $\tA$ as follows.  
 
 Let the lowest exponent of $q$ in the expansion be $m \in \BZ$.  
 
 Suppose $m<-1$.  This leading term of $\tA \in A_1$ cannot come from an 
 element in $h^2\cdot A_0\subset A_1$, because the neighborhood $U$ contains 
 a {\em single} supersingular point and a Hasse invariant has simple zeros 
 (cf.~[\ib, Theorem 12.4.3]).  Thus it must come from an element in $\A\cdot 
 A_1$.  
 
 {\em Case 1.}  Let us suppose that the coefficient of $q^m$ is not divisible 
 by $p$.  Since $\A\cdot\tA = (-1)^{\,p-1}p$ as in \eqref{AA}, the modular 
 function $\A$ must then have a leading coefficient divisible by $p$, which 
 leads to a contradiction (we deduced in last paragraph that the leading term 
 of $\tA$ came from an element in $\A \cdot A_1$).  
 
 {\em Case 2.}  If the coefficient of $q^m$ is divisible by $p$, then 
 $\A\cdot\tA = (-1)^{\,p-1}p$ implies that $\tA\equiv0\md p$, as $\A$ becomes 
 invertible in this case.  This is again a contradiction, since $\tA$ only 
 becomes zero modulo $p$ at precisely the closed supersingular point.  
 
 We have therefore shown that $m\geq-1$.  
 
 Now, by the argument from Case 2, we deduce that the coefficient of $q^m$ is 
 not divisible by $p$ (there is no more contradiction in Case 1, now that 
 $m\geq-1$).  This time, $\A\cdot\tA = (-1)^{\,p-1}p$ implies that 
 $\A\equiv0\md p$,\footnote{More precisely, $\A\equiv0\md p$ near the 
 unramified cusps, not contradicting the stated congruence in Theorem 
 \ref{thm:me}.  } as $\tA$ is invertible.  Thus the congruence \eqref{cong} 
 strengthens to be 
 \[
  \tA\equiv\mu\,h\md p 
 \]
 and the claimed $q$-expansion for $\tA$ follows.  
\end{proof}

We continue with the proof of the theorem.  By Lemma \ref{lem:expansion}, for 
$2\leq i\leq p$, there exist {\em constants} $\tw_i \in p\BZ$ such that 
\begin{equation}
 \label{key}
 \tA^{\,p}+\tw_p\,\tA^{\,p-1}+\cdots+\tw_2\,\tA = \mu_0^{\,p}q^{-p}+O(1) 
\end{equation}
On the other hand, we have 
\begin{equation}
 \label{th}
 \th\,(z) = j\,(pz)-j_0 = q^{-p}+O(1) 
\end{equation}
Comparing the two displays above, we then have 
\[
 \tA^{\,p}+\tw_p\,\tA^{\,p-1}+\cdots+\tw_2\,\tA = \mu_0^{\,p}\,\th+K+O(q) 
\]
for some $K \in \BZ$.  Passing to the mod-$p$ reduction of this identity, we 
see that $K \in p\BZ$.  Therefore, by an abuse of notation, we can instead 
choose a deformation parameter $h$ such that 
\[
 \tA^{\,p}+\tw_p\,\tA^{\,p-1}+\cdots+\tw_2\,\tA = \th+O(q) 
\]
without changing $\tA$ and $\tw_i$ that we already obtained.  From this we 
have 
\begin{equation}
 \label{tAth}
 \tA^{\,p+1}+\tw_p\,\tA^{\,p}+\cdots+\tw_2\,\tA^2 = \th\,\tA+O(1) 
\end{equation}
We claim that the last term $O(1)$ is constant.  In fact, since $A_1$ is a 
free module over $A_0$ of rank $p+1$, $\tA^{\,p+1}$ can be expressed as a 
linear combination of $\tA^i$, $0\leq i\leq p$, with coefficients polynomials 
in $\th$ over $W(\cF_p)$.  Given the $q$-expansions of $\th$ in \eqref{th} 
and of $\tA$ from Lemma \ref{lem:expansion}, we see that the term $O(1)$ must 
be a constant integer, corresponding to the basis element $\tA^i$, $i = 0$, 
with coefficient an integer.  

Thus the terms $\tw_i$ and $O(1)$ in \eqref{tAth} are all constant integers.  

Applying the Atkin-Lehner involution to \eqref{tAth}, we then conclude that 
except for $i = 1$, the coefficients $w_i$ in \eqref{w} are all constant 
integers.  It remains to determine their values.  This follows from Lemma 
\ref{lem:cusps} by holomorphicity of the modular functions $w_i$ over the 
(complex-analytic) moduli scheme, as we move from the formal neighborhood of 
the supersingular point to the cusps via the global functions.  See Figure 
\ref{fig}.  The two pictures illustrate the same process (cf.~\cite[Figure 
2]{Calegari} and \cite[book p.\,290]{DR}) with $X_0(11)$ of genus $1$ as an 
example.  
\begin{figure}
 \centering
 \setcounter{figure}{6}
 \includegraphics[scale=.68]{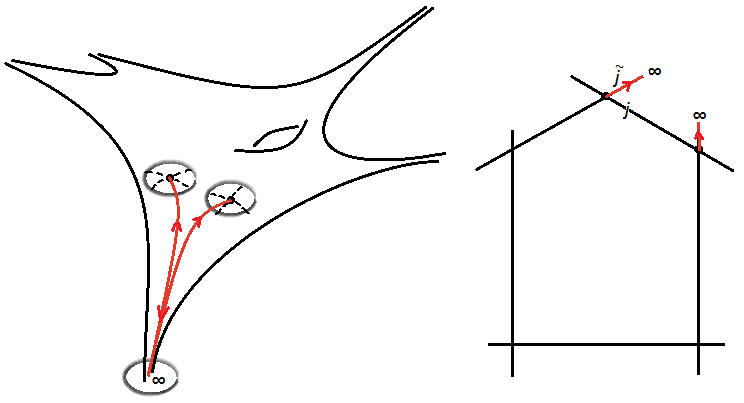}
 \caption{Transporting functions over the moduli}
 \label{fig}
\end{figure}
Explicitly, comparing the former local equation 
\[
 \A^{\,p+1}+w_p\,\A^{\,p}+\cdots+w_2\,\A^2-h\,\A+w_0 = 0 
\]
to the latter local equation 
\[
 (\A-p)\big(\A-(-1)^{\,p-1}\big)^p = 0 
\]
we obtain the polynomial $w(h,\A)$ in Theorem \ref{thm:me} ($h$ becomes 
$(-1)^{\,p^2+1}+p^2\equiv1\md p$, consistent with \cite[12.4.2]{KM}).  This 
completes the proof of Theorem \ref{thm:me}.

\section{Proof of Theorems \ref{thm:po} and \ref{thm:DL}}
\label{sec:pfat}

Theorem \ref{thm:po}\,(i) follows from \cite[Theorem 1.1]{Str98}.  We show 
the remaining parts in two steps.

\subsection{The total power operation formula and the Adem relations}
\label{subsec:Adem}

To compute $\p(h)$, we follow the recipe illustrated in \cite[Example 
3.5]{ho} and generalize [\ib, proof of Proposition 6.4].  By \eqref{me} and 
\eqref{AA}, since 
\[
 \begin{split}
  w(h,\A) = & ~w_{p+1}\A^{\,p+1}+\cdots+w_1\A+w_0 \\
          = & ~w_{p+1}\A^{\,p+1}+\cdots-h\,\A+\tA\,\A 
 \end{split}
\]
is zero in the target ring of $\p$, we have 
\[
 \hskip.5cm h = w_{p+1}\A^{\,p}+\cdots+w_2\,\A+\tA 
\]
where $w_{p+1}$, \ldots, $w_2$ are constants, \ie, they do not involve $h$, 
as computed in Theorem \ref{thm:po}\,(i).  Applying the Atkin-Lehner 
involution to this identity, we then obtain 
\[
 \hskip-.9cm \p(h) = \th = w_{p+1}\,\tA^{\,p}+\cdots+w_2\,\tA+\A 
\]
For $1\leq\T\leq p$, we need only express each $\tA^\T$ as a polynomial in 
$\A$ of degree at most $p$ with coefficients in $E^0$.  The constant terms 
$d_{0,\T}$ of these polynomials have been computed as $d_\T$ in \cite[proof 
of Proposition 6.4]{ho}.  The same method there applies to give the stated 
formulas for the higher coefficients $d_{i,\T}$ with $1\leq i\leq 
p$.\footnote{In fact, those formulas hold for all $0\leq i\leq p$ and $\T\geq 
1$ in expressing $\tA^\T$ from $\A^i$ (with the convention that $w_\T = 0$ if 
$\T>p+1$).  }  This completes the proof of Theorem \ref{thm:po}.  

To derive the Adem relations, we generalize \cite[proof of Proposition 
3.6\,(iv)]{p3} (cf.~\cite[proof of Proposition 6.4]{ho}).  In view of the 
relation $\A\cdot\tA = w_0$, we have 
\[
 \begin{split}
  \p\big(\p(x)\big) = & ~\p\sum_{j=0}^pQ_j(x)\,\A^{\,j} \\
                    = & ~\sum_{j=0}^p\p\big(Q_j(x)\big)\,\p(\A)^{\,j} \\
                    = & ~\sum_{j=0}^p\Bigg(\sum_{i=0}^pQ_iQ_j(x)\,\A^i\Bigg) 
                        \tA^{\,j} \\
                    = & ~\sum_{j=0}^p\Bigg(\sum_{i=0}^jw_0^i\,Q_iQ_j(x)\, 
                        \tA^{\,j-i}+\sum_{i=j+1}^pw_0^{\,j}\,Q_iQ_j(x)\, 
                        \A^{i-j}\Bigg) \\
                    = & ~\sum_{k=0}^p\A^k\Bigg(\sum_{j=0}^p\sum_{i=0}^jw_0^i 
                        \,d_{k,\,j-i}\,Q_iQ_j(x)+\sum_{j=0}^{p-k}w_0^{\,j}\, 
                        Q_{k+j}Q_j(x)\Bigg) 
 \end{split}
\]
where $d_{k,0} = 0$ if $k>0$ (and $d_{0,0} = 1$ from before).  Write the 
expression in last line above as $\sum_{k=0}^p\Psi_k(x)\,\A^k$.  For $1\leq 
k\leq p$, the vanishing of each $\Psi_k$ then gives the stated relation for 
$Q_k Q_0$.

\subsection{The commutation relations}
\label{subsec:comm}

To facilitate computations, let us make a change of variables $\B \ce 
\A+(-1)^{\,p}$.  We then have 
\[
 \begin{split}
  \p(hx) = & ~\p(h)\,\p(x) \\
         = & ~\sum_{i=0}^pQ_i(h)\,\A^i\,\sum_{j=0}^pQ_j(x)\,\A^{\,j} \\
         = & ~\sum_{m=0}^{2p}\!\Bigg(\!\!\sum_{\stackrel{\scriptstyle i+j=m} 
             {0\,\leq\,i,\,j\,\leq\,p}}\!\!Q_i(h)\,Q_j(x)\Bigg)\A^m \\
         = & ~\sum_{m=0}^{2p}\!\Bigg(\!\!\sum_{\stackrel{\scriptstyle i+j=m} 
             {0\,\leq\,i,\,j\,\leq\,p}}\!\!Q_i(h)\,Q_j(x)\Bigg)\!\big(\B+ 
             (-1)^{\,p+1}\big)^m \\
         = & ~\sum_{m=0}^{2p}\!\Bigg(\!\!\sum_{\stackrel{\scriptstyle i+j=m} 
             {0\,\leq\,i,\,j\,\leq\,p}}\!\!Q_i(h)\,Q_j(x)\Bigg)\sum_{n=0}^m 
             \!\ch{m}{n}\B^n(-1)^{(p+1)(m-n)} \\
         = & ~\sum_{n=0}^{2p}e_n\,\B^n 
 \end{split}
\]
where 
\[
  e_n = \sum_{m=n}^{2p}(-1)^{(p+1)(m-n)}\ch{m}{n}\!\!\sum_{\stackrel 
  {\scriptstyle i+j=m}{0\,\leq\,i,\,j\,\leq\,p}}\!\!Q_i(h)\,Q_j(x) 
\]
Formulas for the terms $Q_i(h)$ above are given by Theorem 
\ref{thm:po}\,(ii).  Note that $Q_1(h)$ includes a term of 1.  

We now reduce $\p(hx)$ above modulo $w(h,\A)$, by first rewriting the latter 
as a polynomial in $\B$: 
\[
 \begin{split}
  w(h,\A) = & ~(\A-p)\big(\A+(-1)^{\,p}\big)^p-\big(h-p^2+(-1)^{\,p}\big)\A \\
          = & ~\big(\B+(-1)^{\,p+1}-p\big)\B^{\,p}-\big(h-p^2+(-1)^{\,p}\big) 
              \big(\B+(-1)^{\,p+1}\big) \\
          = & ~\B^{\,p+1}+v_p\,\B^{\,p}+v_1\,\B+v_0 
 \end{split}
\]
where $v_p = (-1)^{\,p+1}-p$, $v_1 = -\big(h-p^2+(-1)^{\,p}\big)$, and $v_0 = 
(-1)^{\,p+1}v_1$.  We then carry out a long division of $\p(hx)$ by $w(h,\A)$ 
with respect to $\B$ and obtain 
\[
 \p(hx)\equiv\sum_{j=0}^p\,f_{\,j}\,\B^{\,j}\md w(h,\A) 
\]
where 
\[
 ~~f_{\,j} = \left\{ \hskip-.1cm
 \begin{array}{lll}
  \displaystyle e_p-v_1e_{2p}+v_p\sum_{m=0}^{p-1}(-1)^{m+1}e_{p+1+m}\,v_p^m 
                && \,j=p \\
  \displaystyle e_j+v_0\sum_{m=0}^{p-j-1}(-1)^{m+1}e_{p+j+1+m}\,v_p^m+v_1 
                \sum_{m=0}^{p-j}(-1)^{m+1}e_{p+j+m}\,v_p^m && 0<j<p \\
  \displaystyle e_0+v_0\sum_{m=0}^{p-1}(-1)^{m+1}e_{p+1+m}\,v_p^m && \,j=0 
 \end{array}
 \right. 
\]
Thus we can rewrite 
\[
 \begin{split}
  \p(hx) = & ~\sum_{j=0}^p\,f_{\,j}\,\big(\A+(-1)^{\,p}\big)^{\,j} \\
         = & ~\sum_{j=0}^p\,f_{\,j}\,\sum_{i=0}^j\!\ch{\,j\,}{i}\A^i\, 
             (-1)^{\,p\,(\,j-i)} \\
         = & ~\sum_{i=0}^p\Bigg[\sum_{j=i}^p(-1)^{\,p\,(\,j-i)}\ch{\,j\,}{i} 
             \,f_{\,j}\Bigg]\A^i 
 \end{split}
\]
On the other hand, $\p(hx) = \sum_{i=0}^pQ_i(hx)\,\A^i$.  Comparing this to 
the last expression for $\p(hx)$ above, term by term, we obtain the 
commutation relations as stated.  This completes the proof of Theorem 
\ref{thm:DL}.

\renewcommand\refname{}
\newcommand{\AX}[1]{\href{http://arxiv.org/abs/#1}{arXiv:#1}}
\newcommand{\MRn}[1]{\href{http://www.ams.org/mathscinet-getitem?mr=#1}{MR#1}}
\newcommand{\name}{coctalos.pdf}
\wt{.}\vspace{-.5in}

\end{document}